\numberwithin{equation}{section}
\newtheorem{thm}{Theorem}[section]
\newtheorem{prop}[thm]{Proposition}
\newtheorem{cor}[thm]{Corollary}
\newtheorem{lem}[thm]{Lemma}
\theoremstyle{definition}
\newtheorem{defn}[thm]{Definition}
\newtheorem{example}[thm]{Example}
\theoremstyle{remark}
\newtheorem{rem}[thm]{Remark}
\providecommand{\comment}[1]{}
\providecommand{\linv}[2][\relax]{\mathfrak{L}^{#2}_{#1}}
\providecommand{\norm}[2][\relax]{\left\|#2\right\|\ifx#1\relax\else_{#1}\fi}
\providecommand{\modulus}[2][\relax]{\left| #2 \right|\ifx#1\relax\else_{#1}\fi}
\providecommand{\uir}[3][0]{\ifcase #1{\rho^{#2}_{#3}}%
\or {\breve{\rho}^{#2}_{#3}}%
\or {\tilde{\rho}^{#2}_{#3}}%
\or {\hat{\rho}^{#2}_{#3}}\fi}
\providecommand{\Space}[3][]{\ensuremath{\mathbb{#2}^{#3}_{#1}{}}}
\providecommand{\such}{\,\mid\,}
  \providecommand{\FSpace}[3][]{\ensuremath{\ifx#2l \ell_{#3}^{#1}{}\else
  #2_{#3}^{#1}{}\fi}} 
\providecommand{\oper}[1]{\mathcal{#1}}
\providecommand{\rmi}{\mathrm{i}}
\providecommand{\rme}{\mathrm{e}}
\providecommand{\scalar}[3][\relax]{\left\langle #2,#3 
        \right\rangle\ifx#1\relax\else_{#1}\fi}
\providecommand{\algebra}[1]{\ensuremath{\mathfrak{#1}}}
  \providecommand{\Zbl}[1]{Zbl\href{http://www.emis.de:80/cgi-bin/zmen/ZMATH/en/zmathf.html?first=1&maxdocs=3&type=html&an=#1&format=complete}{#1}}
\providecommand{\myeprint}[2]{E-print: \href{#1}{\texttt{#2}}}
\providecommand{\SL}[1][2]{\ensuremath{\mathrm{SL}_{#1}(\Space{R}{})}}
\let\oldexample=\example
\let\oldendexample=\endexample
\renewenvironment{example}{\oldexample}{\(\diamond\)\oldendexample}
\begin{document}
\thispagestyle{plain}
 \sloppy

 \begin{center}
\textbf{  THE REAL AND COMPLEX
  TECHNIQUES\\ IN HARMONIC ANALYSIS\\FROM THE POINT OF VIEW OF COVARIANT
  TRANSFORM}
 \end{center}

\vskip 0.3cm

\centerline{\bf {\href{http://www.maths.leeds.ac.uk/~kisilv/}{Vladimir V. Kisil}} }

\markboth{\hfill{\footnotesize\rm   V.V.~Kisil  }\hfill}
{\hfill{\footnotesize\sl  The real and complex
  techniques in harmonic analysis}\hfill}

\vskip 0.3cm

\vskip 0.7 cm





\centerline{\emph{Dedicated to Prof. Sergei V. Rogosin on the occasion of his 60th birthday}}

\vskip 0.2cm

\noindent {\bf Key words:}  wavelet, coherent state, covariant transform, reconstruction
  formula, the affine group, \(ax+b\)-group, square integrable
  representations, admissible vectors, Hardy space, fiducial operator,
  approximation of the identity, atom, nucleus, atomic decomposition,
  Cauchy integral, Poisson integral, Hardy--Littlewood maximal
  function, grand maximal function, vertical maximal function,
  non-tangential maximal function, intertwining operator,
  Cauchy-Riemann operator, Laplace operator, singular integral
  operator, SIO, Hilbert transform, boundary behaviour, Carleson
  measure, Littlewood--Paley theory

\vskip 0.2cm

\noindent {\bf AMS Mathematics Subject Classification:} Primary 42-02; Secondary 42A20, 42B20, 42B25, 42B35,
  42C40, 43A50, 43A80.

\vskip 0.2cm

\noindent {\bf Abstract.}
  This paper reviews  complex and real techniques in harmonic
  analysis. We describe the common source of both approaches rooted in
  the covariant transform generated by the affine group.

\tableofcontents

\section{\large Introduction}
\label{sec:introduction}

There are two main approaches in harmonic analysis on the real line.
The real variables technique uses various maximal functions, dyadic
cubes and, occasionally, the Poisson integral~\cite{Stein93}.  The
complex variable technique is based on the Cauchy integral and fine
properties of analytic functions~\cites{Nikolski02a,Nikolski02b}.

Both methods seem to have clear advantages. The real variable
technique:
\begin{enumerate}
\item does not require an introduction of the imaginary unit for a
  study of real-valued harmonic functions of a real variable (Occam's
  Razor: among competing hypotheses, the one with the fewest
  assumptions should be selected); 
\item allows a straightforward generalization to several real variables. 
\end{enumerate}
By contrast, access to the beauty and power of analytic functions
(e.g., M\"obius transformations, factorisation of zeroes,
etc.~\cite{Koosis98a}) is the main reason to use the complex variable
technique. A posteriori, a multidimensional analytic version was also
discovered~\cite{McIntosh95a}, it is based on the monogenic
Clifford-valued functions~\cite{BraDelSom82}.

Therefore, propensity for either techniques becomes a personal choice
of a researcher. Some of them prefer the real variable method,
explicitly cleaning out any reference to analytic or harmonic
functions~\cite{Stein93}*{Ch.~III, p.~88}. Others,
e.g.~\cite{Krantz09a,CoifmanJonesSemmes89}, happily combine the both
techniques. However, the reasons for switching between two methds at
particular places may look mysterious.

The purpose of the present paper is to revise the origins of the real
and complex variable techniques. Thereafter, we describe the common
group-theoretical root of both. Such a unification deepens our
understanding of both methods and illuminates their interaction.

\begin{rem}
  In this paper, we consider only examples which are supported by the
  affine group \(\mathrm{Aff}\) of the real line. In the essence,
  \(\mathrm{Aff}\) is the semidirect product of the group of dilations
  acting on the group of translations. Thus, our consideration can be
  generalized to the semidirect product of dilations and homogeneous
  (nilpotent) Lie groups, cf.~\cites{FollStein82,Kisil12b}. Other
  important extensions are the group \(\SL\)\index{$\SL$}%
  \index{group!$\SL$} and associated hypercomplex algebras, see
  Rems.~\ref{re:hardy-SL2},~\ref{re:hypercomplex-numbers}
  and~\cites{Kisil05a,Kisil12a,Kisil11c}. However, we do not aim here
  to a high level of generality, it can be developed in subsequent
  works once the fundamental issues are sufficiently clarified.
\end{rem}

\section{\large Two approaches to harmonic analysis}
\label{sec:preliminaries}

As a starting point of our discussion, we provide a schematic outline
of complex and real variables techniques in the one-dimensional harmonic
analysis.  The application of complex analysis may be
summarised in the following sequence of principal steps:
\begin{description}
\item[Integral transforms.] For a function 
  \(f\in\FSpace{L}{p}(\Space{R}{})\), we apply the Cauchy%
  \index{integral!Cauchy}%
  \index{Cauchy!integral} or Poisson%
  \index{Poisson kernel}%
  \index{kernel!Poisson} integral transforms:
  \begin{align}
    \label{eq:cauchy-int}
    [\oper{C} f] (x+\rmi y)& = \frac{1}{2\pi \rmi} \int_{\Space{R}{}}
    \frac{f(t)}{t-(x+\rmi y)}\, dt\,,\\
    \label{eq:poisson-int}
    [\oper{P}f](x, y) & =\frac{1}{\pi}\int_{\Space{R}{}} \frac {y}{(t-x)^2 + y^2} \,
    f(t)\, dt \,.
  \end{align}
  An equivalent transformation on the unit circle replaces the
  Fourier series \(\sum_k c_k e^{\rmi kt}\) by the Taylor series 
  \(\sum_{k=0}^\infty c_k z^k\) in the complex variable \(z=r e^{\rmi t}\), \(0\leq
  r<1\). It is used for the Abel summation%
  \index{Abel summation}%
  \index{summation!Abel} of trigonometric series~\cite{Zygmund02}*{\S~III.6}.  
\item[Domains.] Above
  integrals~\eqref{eq:cauchy-int}--\eqref{eq:poisson-int} map the
  domain of functions from the real line to the upper half-plane,
  which can be conveniently identified with the set of complex numbers
  having a positive imaginary part. The larger domain allows us to
  inspect functions in greater details.
\item[Differential operators.] The image of
  integrals~\eqref{eq:cauchy-int} and~\eqref{eq:poisson-int} consists
  of functions,belonging to the kernel of the Cauchy--Riemann operator
  \(\partial_{\bar{z}}\) and Laplace operator  \(\Delta\)
  respectively, i.e.:
  \begin{equation}
    \label{eq:CR-Laplace}
    \partial_{\bar{z}}=\frac{\partial}{\partial x}+\rmi
    \frac{\partial}{\partial y}\,,\qquad
    \Delta=\frac{\partial^2}{\partial x^2}+
    \frac{\partial^2}{\partial y^2}\,.
  \end{equation}
  Such functions have numerous nice properties in the upper
  half-plane, e.g. they are infinitely differentiable, which make
  their study interesting and fruitful.
\item[Boundary values and SIO.] To describe properties of the initial
  function \(f\) on the real line we consider the boundary values of
  \([\oper{C} f] (x+\rmi y)\) or \([\oper{P}f](x, y)\), i.e. their
  limits as \(y\rightarrow 0\) in some sense. The Sokhotsky--Plemelj
  formula%
  \index{Sokhotsky--Plemelj formula}%
  \index{formula!Sokhotsky--Plemelj} provides the boundary value of
  the Cauchy integral~\cite{MityushevRogosin00a}*{(2.6.6)}:
  \begin{equation}
    \label{eq:sokhotsky-plemelj}
    [\oper{C}f](x,0)=\frac{1}{2} f(x)+\frac{1}{2\pi\rmi}
    \int_{\Space{R}{}} \frac{f(t)}{t-x}\,dt.
  \end{equation}
  The last term is a singular integral operator%
  \index{singular!integral operator}%
  \index{operator!integral!singular} defined through the principal
  value%
  \index{principal!value}%
  \index{value!principal} in the Cauchy sense:
  \begin{equation}
    \label{eq:pv-cauchy}
    \frac{1}{2\pi\rmi}
    \int_{\Space{R}{}}
    \frac{f(t)}{t-x}\,dt=\lim_{\varepsilon\rightarrow 0}
    \frac{1}{2\pi\rmi}
    \int\limits_{-\infty}^{x-\varepsilon}+
    \int\limits_{x+\varepsilon}^{\infty}
    \frac{f(t)}{t-x}\,dt\, .
  \end{equation}
  For the Abel summation the boundary values are replaced by the limit
  as \(r\rightarrow 1^-\) in the series  \(\sum_{k=0}^\infty c_k (r e^{\rmi t})^k\).
\item[Hardy space.] Sokhotsky--Plemelj
  formula~\eqref{eq:sokhotsky-plemelj} shows, that the boundary value
  \([\oper{C}f](x,0)\) may be different from \(f(x)\). The vector
  space of functions \(f(x)\) such that \([\oper{C}f](x,0)=f(x)\) is
  called the Hardy space%
  \index{space!Hardy}%
  \index{Hardy!space} on the real line~\cite{Nikolski02a}*{A.6.3}.
 \end{description}
Summing up this scheme: we replace a function (distribution) on the
real line by a nicer (analytic or harmonic) function on a larger
domain---the upper half-plane. Then, we trace down properties of
the extensions to its boundary values and, eventually, to the initial
function. 

The real variable approach does not have a clearly designated path in
the above sense. Rather, it looks like a collection of interrelated
tools, which are efficient for various purposes. To highlight
similarity and differences between real and complex analysis, we line
up the elements of the real variable technique in the following way:
\begin{description}
\item[Hardy--Littlewood maximal function]%
  \index{Hardy--Littlewood!maximal functions}%
  \index{maximal functions!Hardy--Littlewood} is, probably, the most
  important component~\citelist{\cite{Koosis98a}*{\S~VIII.B.1}
    \cite{Stein93}*{Ch.~2} \cite{Garnett07a}*{\S~I.4} \cite{Burenkov12a}} of this technique. The maximal function
  \(f^M\) is defined on the real line by the identity:
  \begin{equation}
    \label{eq:hardy-littlewood}
    f^M(t)= \sup_{a>0} \left\{\frac{1}{2a} \int\limits^{t+a}_{t-a}
      \modulus{f\left(x\right)}\,dx\right\}.
  \end{equation}
\item[Domain] is not apparently changed, the maximal function \(f^M\)
  is again defined on the real line. However, an efficient treatment
  of the maximal functions requires consideration of tents%
  \index{tent}~\cite{Stein93}*{\S~II.2}, which are parametrised by their
  vertices, i.e.  points \((a,b)\), \(a>0\), of the upper
  half-plane. In other words, we repeatedly need values of all
  integrals \(\frac{1}{2a} \int\limits^{t+a}_{t-a}
  \modulus{f\left(x\right)}\,dx\), rather than the single value of the
  supremum
  over \(a\).
\item[Littlewood--Paley theory]%
  \index{Littlewood--Paley theory}%
  \index{theory!Littlewood--Paley}~\cite{CoifmanJonesSemmes89}*{\S~3}
  and associated dyadic squares\index{dyadic!squares}%
  \index{squares!dyadic}
  technique~\citelist{\cite{Garnett07a}*{Ch.~VII, Thm.~1.1}
    \cite{Stein93}*{\S~IV.3}} as well as stopping time
  argument~\cite{Garnett07a}*{Ch.~VI, Lem.~2.2} are based on bisection
  of a function's domain into two equal parts.
\item[SIO] is a natural class of bounded linear operators in
  \(\FSpace{L}{p}(\Space{R}{})\). Moreover, maximal operator \(M:
  f\rightarrow f^M\)~\eqref{eq:hardy-littlewood} and
  singular integrals are intimately related~\cite{Stein93}*{Ch.~I}.
\item[Hardy space] can be defined in several equivalent ways from
  previous notions. For example, it is the class of such functions
  that their image under maximal
  operator~\eqref{eq:hardy-littlewood} or singular
  integral~\eqref{eq:pv-cauchy} belongs to
  \(\FSpace{L}{p}(\Space{R}{})\)~\cite{Stein93}*{Ch.~III}.
\end{description}
The following discussion will line up real variable objects along the
same axis as complex variables. We will summarize this in
Table~\ref{tab:corr-betw-diff}.

\section{\large Affine group and its representations}
\label{sec:prel-affine-group}

It is hard to present harmonic analysis and wavelets without touching
the affine group one way or another. Unfortunately, many sources only
mention the group and do not use it explicitly. On the other hand, it
is equally difficult to speak about the affine group without a
reference to results in harmonic analysis: two theories are intimately
intertwined. In this section we collect fundamentals of the affine
group and its representations, which are not yet a standard background
of an analyst.

Let \(G=\mathrm{Aff}\) be the \(ax+b\)%
\index{$ax+b$ group}%
\index{group!$ax+b$} (or the \emph{affine}%
\index{affine!group|see{$ax+b$ group}}%
\index{group!affine|see{$ax+b$ group}})
group~\cite{AliAntGaz00}*{\S~8.2}, which is represented (as a
topological set) by the upper half-plane \(\{(a,b) \such
a\in\Space[+]{R}{}, \ b\in\Space{R}{}\}\). The group law is:
\begin{equation}
  \label{eq:ax+b-group-law}
  (a,b)\cdot (a',b')=(aa',ab'+b)
  .
\end{equation}
As any other group, \(\mathrm{Aff}\) has the \emph{left regular
  representation}%
\index{$ax+b$ group!left regular representation}%
\index{left regular representation!$ax+b$ group}%
\index{representation!left regular!$ax+b$ group}
by shifts on functions \(\mathrm{Aff}\rightarrow \Space{C}{}\):
\begin{equation}
  \label{eq:ax+b-left-regular}
  \Lambda(a,b): f(a',b') \mapsto f_{(a,b)}(a',b')=f\left(\frac{a'}{a},\frac{b'-b}{a}\right).
\end{equation}
A left invariant measure on \(\mathrm{Aff}\) is \(dg=a^{-2}\,da\,db\),
\(g=(a,b)\)%
\index{$ax+b$ group!invariant measure}%
\index{group!$ax+b$!invariant measure}%
\index{invariant!measure}%
\index{measure!invariant}. By the definition, the left regular
representation~\eqref{eq:ax+b-left-regular} acts by unitary operators
on \(\FSpace{L}{2}(\mathrm{Aff},dg)\). The group is not unimodular and
a right invariant measure is \(a^{-1}\,da\,db\).

There are two important subgroups of the \(ax+b\) group:
\begin{equation}
  \label{eq:subgroups-A-N}
  A=\{(a,0)\in \mathrm{Aff} \such a\in\Space[+]{R}{} \} \quad 
  \text{ and } \quad
  N=\{(1,b)\in \mathrm{Aff} \such b\in\Space{R}{} \}.
\end{equation}

An isometric representation of \(\mathrm{Aff}\) on
\(\FSpace{L}{p}(\Space{R}{})\) is given by the formula:
\begin{equation}
  \label{eq:ax+b-repr-quasi-reg}
  [\uir{}{p}(a,b)\, f](x)= a^{-\frac{1}{p}}\,f\left(\frac{x-b}{a}\right).
\end{equation}
Here, we identify the real line with the subgroup \(N\) or, even more
accurately, with the homogeneous space
\(\mathrm{Aff}/N\)~\cite{ElmabrokHutnik12a}*{\S~2}. This
representation is known as \emph{quasi-regular}%
\index{$ax+b$ group!representation!quasi-regular}%
\index{representation!$ax+b$ group!quasi-regular}%
\index{quasi-regular!representation of $ax+b$ group} for its
similarity with~\eqref{eq:ax+b-left-regular}. The action of the
subgroup \(N\) in~\eqref{eq:ax+b-repr-quasi-reg} reduces to shifts,
the subgroup \(A\) acts by dilations.

\begin{rem}
  The \(ax+b\) group definitely escapes Occam's Razor in
  harmonic analysis, cf. the arguments against the imaginary unit in
  the Introduction. Indeed, shifts are required to define convolutions
  on \(\Space{R}{n}\), and an \emph{approximation of the identity}%
  \index{approximation!identity, of the}%
  \index{identity!approximation of the}~\cite{Stein93}*{\S~I.6.1} is a
  convolution with the dilated kernel. The same scaled convolutions
  define the fundamental \emph{maximal
  functions}%
  \index{maximal functions}, see~\cite{Stein93}*{\S~III.1.2}
  cf. Example~\ref{ex:hardy-littlewood} below. Thus, we can avoid 
  usage of the upper half-plane \(\Space[+]{C}{}\), but the same set will
  anyway re-invent itself in the form of the \(ax+b\) group.
\end{rem}

The representation \eqref{eq:ax+b-repr-quasi-reg} in
\(\FSpace{L}{2}(\Space{R}{})\) is reducible and the space can be split
into irreducible subspaces. Following the philosophy presented in the
Introduction to the paper~\cite{Kisil09e}*{\S~1} we give the following
\begin{defn}
  \label{de:hardy-space}
  For a representation \(\uir{}{}\) of a group \(G\) in a space \(V\),
  a \emph{generalized Hardy space}%
  \index{space!Hardy!generalized}%
  \index{Hardy!space!generalized} \(\FSpace{H}{}\) is an
  \(\uir{}{}\)-irreducible%
  \index{representation!irreducible}%
  \index{irreducible!representation} (or \(\uir{}{}\)-primary%
  \index{representation!primary}%
  \index{primary!representation}, as discussed in
  Section~\ref{sec:comp-covar-transfr}) subspace of \(V\).
\end{defn}
\begin{example}
  Let \(G=\mathrm{Aff}\) and the representation \(\uir{}{p}\) be
  defined in \(V=\FSpace{L}{p}(\Space{R}{})\)
  by~\eqref{eq:ax+b-repr-quasi-reg}. Then the classical Hardy spaces
  \(\FSpace{H}{p}(\Space{R}{})\)%
  \index{space!Hardy}%
  \index{Hardy!space} are \(\uir{}{p}\)-irreducible, thus are covered
  by the above definition.
\end{example}
Some ambiguity in picking the Hardy space out of all (well, two, as we
will see below) irreducible components is resolved by the traditional
preference.
\begin{rem}
  We have defined the Hardy space completely in terms of
  representation theory of \(ax+b\) group. The traditional
  descriptions, via the Fourier transform or analytic
  extensions, will be corollaries  in our approach, see
  Prop.~\ref{pr:fourier-decompos}  and Example~\ref{ex:cauchy-riemann-wav}.
\end{rem}
\begin{rem}
  \label{re:hardy-SL2}
  It is an interesting and important observation, that the Hardy space
  in \(\FSpace{L}{p}(\Space{R}{})\) is invariant under the action of a
  larger group \(\SL\)\index{$\SL$}%
  \index{group!$\SL$}, the group of \(2\times 2\) matrices with real
  entries and determinant equal to \(1\), the group operation coincides
  with the multiplication of matrices. The \(ax+b\) group is
  isomorphic to the subgroup of the upper-triangular matrices in
  \(\SL\). The group \(\SL\) has an isometric
  representation in \(\FSpace{L}{p}(\Space{R}{})\):
  \begin{equation}
    \label{eq:sl2r-action-rl}
    \begin{pmatrix}
      a&b\\c&d
    \end{pmatrix}:\ f(x)\ \mapsto\
    \frac{1}{\modulus{a-cx}^{\frac{2}{p}}}\,f\left(\frac{dx-b}{a-cx}\right), 
  \end{equation}
  which produces quasi-regular
  representation~\eqref{eq:ax+b-repr-quasi-reg} by the restriction to
  upper-triangular matrices. The Hardy space
  \(\FSpace{H}{p}(\Space{R}{})\) is invariant under the above action
  as well. Thus, \(\SL\) produces a refined version in comparison with
  the harmonic analysis of the \(ax+b\) group considered in this
  paper. Moreover, as representations of the \(ax+b\) group are
  connected with complex numbers, the structure of \(\SL\) links
  all three types of hypercomplex numbers%
  \index{hypercomplex numbers}%
  \index{numbers!hypercomplex}~\citelist{\cite{Kisil05a}
    \cite{Kisil12a}*{\S~3.3.4} \cite{Kisil11c}*{\S~3}}, see also
  Rem.~\ref{re:hypercomplex-numbers}. 
\end{rem}

To clarify a decomposition of \(\FSpace{L}{p}(\Space{R}{})\) into
irreducible subspaces of 
representation~\eqref{eq:ax+b-repr-quasi-reg} we need another
realization of this representation. It is called \emph{co-adjoint}%
\index{$ax+b$ group!representation!co-adjoint}%
\index{representation!$ax+b$ group!co-adjoint}%
\index{co-adjoint!representation of $ax+b$ group} and is related to the
\emph{orbit method}%
\index{orbit!method}%
\index{method!orbit} of Kirillov~\citelist{
  \cite{Kirillov04a}*{\S~4.1.4} \cite{Folland95}*{\S~6.7.1}}. Again,
this isometric representation can be defined on
\(\FSpace{L}{p}(\Space{R}{})\) by the formula:
\begin{equation}
  \label{eq:ax+b-repr-co-adj}
  [\uir[3]{}{p}(a,b)\, f](\lambda )= a^{\frac{1}{p}}\,\rme^{-2\pi\rmi b
    \lambda } f(a\lambda ).
\end{equation}
Since \(a>0\), there is an obvious decomposition into invariant subspaces of \(\uir[3]{}{p}\):
\begin{equation}
  \label{eq:co-adjoint-inv-decomp}
  \FSpace{L}{p}(\Space{R}{})=\FSpace{L}{p}(-\infty,0)\oplus\FSpace{L}{p}(0,\infty).
\end{equation}
It is possible to demonstrate, that these components are irreducible.
This decomposition has a spatial nature, i.e., the subspaces have
disjoint supports. Each half-line can be identified with the subgroup
\(A\) or with the homogeneous space \(\mathrm{Aff}/N\).

The restrictions \(\uir[3]{+}{p}\) and \(\uir[3]{-}{p}\) of the
co-adjoint representation \(\uir[3]{}{p}\) to invariant
subspaces~\eqref{eq:co-adjoint-inv-decomp} for \(p=2\) are not unitary equivalent.
Any irreducible unitary representation of \(\mathrm{Aff}\) is unitary
equivalent either to \(\uir[3]{+}{2}\) or \(\uir[3]{-}{2}\). Although
there is no intertwining operator between \(\uir[3]{+}{p}\) and
\(\uir[3]{-}{p}\), the
map:
\begin{equation}
  \label{eq:J-map}
  {J}:\  \FSpace{L}{p}(\Space{R}{}) \rightarrow
  \FSpace{L}{p}(\Space{R}{}):\  {f}(\lambda )\mapsto {f}(-\lambda ),
\end{equation}
has the property
\begin{equation}
  \label{eq:J-swaps-reps}
  \uir[3]{-}{p}(a,-b)\circ{J}={J}\circ\uir[3]{+}{p}(a,b) 
\end{equation}
which corresponds to the
outer automorphism \((a,b)\mapsto (a,-b)\) of \(\mathrm{Aff}\).

As was already mentioned, for the Hilbert space \(\FSpace
{L}{2}(\Space{R}{})\), representations~\eqref{eq:ax+b-repr-quasi-reg}
and \eqref{eq:ax+b-repr-co-adj} are unitary equivalent, i.e., there is
a unitary intertwining operator between them. We may guess its
nature as follows. The eigenfunctions of the operators \(\uir{}{2}(1,b)\) are
\(\rme^{2\pi\rmi \omega  x}\)  and the eigenfunctions of
\(\uir[3]{}{2}(1,b)\) are \(\delta(\lambda -\omega )\). Both sets form
``continuous bases'' of \(\FSpace{L}{2}(\Space{R}{})\) and the
unitary operator which maps one to another is the Fourier transform:
\begin{equation}
  \label{eq:fourier}
  \oper{F}: f(x) \mapsto \hat{f}(\lambda)=\int_{\Space{R}{}} 
    e^{-2\pi\rmi \lambda x}\, f(x)\, dx.
\end{equation}
Although, the above arguments were informal, the intertwining property
\(\oper{F}\uir{}{2}(a,b)=\uir[3]{}{2}(1,b)\oper{F}\) can be directly
verified by the appropriate change of variables in the Fourier
transform. Thus, cf.~\cite{Nikolski02a}*{Lem.~A.6.2.2}:
\begin{prop}
  \label{pr:fourier-decompos}
  The Fourier transform maps  irreducible invariant subspaces
  \(\FSpace{H}{2}\) and \(\FSpace[\perp]{H}{2}\)
  of~\eqref{eq:ax+b-repr-quasi-reg} to irreducible invariant
  subspaces \(\FSpace{L}{2}(0,\infty)=\oper{F}(\FSpace{H}{2})\) and
  \(\FSpace{L}{2}(-\infty,0)=\oper{F}(\FSpace[\perp]{H}{2})\) of 
  co-adjoint representation~\eqref{eq:ax+b-repr-co-adj}. In
  particular, \(\FSpace {L}{2}(\Space{R}{})=\FSpace{H}{2}\oplus
  \FSpace[\perp]{H}{2}\). 
\end{prop}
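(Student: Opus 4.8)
The plan is to verify the intertwining relation
\(\oper{F}\,\uir{}{2}(a,b)=\uir[3]{}{2}(1,b)\,\oper{F}\) by a direct
change of variables, and then deduce the decomposition statement as a
formal consequence of this relation together with the spatial
decomposition~\eqref{eq:co-adjoint-inv-decomp} and irreducibility of
its summands, which has already been granted in the text preceding the
statement. First I would compute, for \(f\in\FSpace{L}{2}(\Space{R}{})\),
\begin{equation*}
  \oper{F}[\uir{}{2}(a,b)f](\lambda)
  =\int_{\Space{R}{}}\rme^{-2\pi\rmi\lambda x}\,a^{-1/2}
  f\!\left(\frac{x-b}{a}\right)dx,
\end{equation*}
and substitute \(x=ay+b\), \(dx=a\,dy\), obtaining
\(a^{1/2}\rme^{-2\pi\rmi\lambda b}\int_{\Space{R}{}}
\rme^{-2\pi\rmi a\lambda y}f(y)\,dy
=a^{1/2}\rme^{-2\pi\rmi b\lambda}\hat f(a\lambda)\),
which is exactly \([\uir[3]{}{2}(a,b)\hat f](\lambda)\). (Note the
right-hand side of the claimed identity in the excerpt should read
\(\uir[3]{}{2}(a,b)\oper{F}\); with \(\uir[3]{}{2}\) as defined
in~\eqref{eq:ax+b-repr-co-adj} this is what the computation yields,
and \(\uir[3]{}{2}(a,b)\) restricted to the subgroup \(N\) does reduce
to \(\uir[3]{}{2}(1,b)\), which is presumably the intended reading.) In
any case, \(\oper{F}\) is a unitary operator on
\(\FSpace{L}{2}(\Space{R}{})\) by Plancherel, and it conjugates
\(\uir{}{2}\) into \(\uir[3]{}{2}\).

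Next I would push the decomposition across this unitary equivalence.
Since \(\oper{F}\) is invertible and intertwines the two
representations, it carries \(\uir{}{2}\)-invariant subspaces to
\(\uir[3]{}{2}\)-invariant subspaces and preserves irreducibility; the
same is true of \(\oper{F}^{-1}\) in the reverse direction. The
co-adjoint side has the explicit invariant decomposition
\(\FSpace{L}{2}(\Space{R}{})=\FSpace{L}{2}(-\infty,0)\oplus
\FSpace{L}{2}(0,\infty)\) from~\eqref{eq:co-adjoint-inv-decomp}, whose
two summands are irreducible (as asserted just after that display).
Therefore \(\FSpace{H}{2}:=\oper{F}^{-1}(\FSpace{L}{2}(0,\infty))\) and
\(\FSpace[\perp]{H}{2}:=\oper{F}^{-1}(\FSpace{L}{2}(-\infty,0))\) are
irreducible \(\uir{}{2}\)-invariant subspaces, they are mutually
orthogonal because \(\oper{F}\) is unitary and
\(\FSpace{L}{2}(0,\infty)\perp\FSpace{L}{2}(-\infty,0)\), and their sum
is all of \(\FSpace{L}{2}(\Space{R}{})\) since \(\oper{F}\) is onto.
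This is precisely the assertion
\(\FSpace{L}{2}(\Space{R}{})=\FSpace{H}{2}\oplus\FSpace[\perp]{H}{2}\)
together with the stated correspondence of summands under \(\oper{F}\).
The identification of \(\FSpace{H}{2}\) so defined with the classical
Hardy space is consistent with the earlier Example and Remark and is
the content of~\cite{Nikolski02a}*{Lem.~A.6.2.2} cited in the text, so
I would simply invoke it rather than reprove the Paley--Wiener
characterization here.

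The only genuinely load-bearing computation is the change of variables,
which is routine; the rest is abstract nonsense about transporting a
direct-sum decomposition through a unitary intertwiner. The one point
that deserves care—and which I would flag as the main thing to get
right—is the bookkeeping of the multiplier \(a^{\pm 1/p}\) and the sign
in the exponential under the substitution \(x=ay+b\), so that the
output matches~\eqref{eq:ax+b-repr-co-adj} on the nose; a misplaced
factor of \(a\) or a sign in \(\rme^{-2\pi\rmi b\lambda}\) would break
the intertwining identity. A secondary subtlety is that irreducibility
of the half-line subspaces is used as an input; if one wanted a
self-contained argument one would prove it by showing any nonzero
closed \(\uir[3]{+}{2}\)-invariant subspace of \(\FSpace{L}{2}(0,\infty)\)
must be everything, e.g.\ via the fact that multiplications by
\(\rme^{-2\pi\rmi b\lambda}\) and dilations together generate a maximal
abelian-plus-dilation action on \(\FSpace{L}{2}(0,\infty)\), but the
excerpt already asserts this, so I would not belabour it.
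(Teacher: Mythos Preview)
Your proposal is correct and follows exactly the argument sketched in the paper immediately before the proposition: verify the intertwining identity by the change of variables \(x=ay+b\), then transport the spatial decomposition~\eqref{eq:co-adjoint-inv-decomp} back through the unitary \(\oper{F}\); the paper gives no further proof beyond this outline and the reference to~\cite{Nikolski02a}*{Lem.~A.6.2.2}. Your observation that the displayed intertwining relation should read \(\oper{F}\uir{}{2}(a,b)=\uir[3]{}{2}(a,b)\oper{F}\) rather than \(\uir[3]{}{2}(1,b)\) is also correct---the computation you wrote out confirms it.
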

Reflection \(J\)~\eqref{eq:J-map} anticommutes  with the Fourier
transform: \(\oper{F} J=-J \oper{F}\). Thus,  \(J\) also interchange
the irreducible components \(\uir{+}{p}\) and \(\uir{-}{p}\) of 
quasi-regular representation~\eqref{eq:ax+b-repr-quasi-reg} according
to~\eqref{eq:J-swaps-reps}.


Summing up, the unique r\^ole of the Fourier transform in harmonic
analysis is based on the following facts from the representation
theory. The Fourier transform
\begin{itemize}
\item  intertwines shifts in quasi-regular
  representation~\eqref{eq:ax+b-repr-quasi-reg} to operators of
  multiplication in co-adjoint
  representation~\eqref{eq:ax+b-repr-co-adj};
\item intertwines dilations
  in~\eqref{eq:ax+b-repr-quasi-reg} to dilations
  in~\eqref{eq:ax+b-repr-co-adj};
\item maps the decomposition
  \(\FSpace{L}{2}(\Space{R}{})=\FSpace{H}{2}\oplus
  \FSpace[\perp]{H}{2}\) into spatially separated spaces with
  disjoint supports;
\item anticommutes with \(J\), which interchanges \(\uir{+}{2}\) and
  \(\uir{-}{2}\).
\end{itemize}
Armed with this knowledge we are ready to proceed to harmonic analysis.

\section{\large Covariant transform}
\label{sec:covariant-transform}

We make an extension of the wavelet\index{wavelet} construction
defined in terms of group representations. See~\cite{Kirillov76} for a
background in the representation theory, however, the only treated
case in this paper is the \(ax+b\) group.
\begin{defn}\cites{Kisil09d,Kisil11c}
  \label{de:covariant-transform}
  Let \(\uir{}{}\) be a representation of
  a group \(G\) in a space \(V\) and \(F\) be an operator acting from \(V\) to a space
  \(U\). We define a \emph{covariant transform}%
  \index{covariant!transform}%
  \index{transform!covariant}
  \(\oper{W}_F^{\uir{}{}}\) acting from \(V\) to the space \(\FSpace{L}{}(G,U)\) of
  \(U\)-valued functions on \(G\) by the formula:
  \begin{equation}
    \label{eq:coheret-transf-gen}
    \oper{W}_F^{\uir{}{}}: v\mapsto \hat{v}(g) = F(\uir{}{}(g^{-1}) v), \qquad
    v\in V,\ g\in G.
  \end{equation}
  The operator \(F\) will be called a \emph{fiducial operator}%
  \index{fiducial operator}%
  \index{operator!fiducial} in this context (cf. the fiducial vector
  in~\cite{KlaSkag85}).
\end{defn}
We may drop the sup/subscripts from \(\oper{W}_F^{\uir{}{}}\) if the
functional \(F\) and/or the representation \(\uir{}{}\)
are clear from the context.
\begin{rem}
  We do not require that the fiducial operator \(F\) be linear.
  Sometimes the positive homogeneity, i.e.\ \(F(t v)=tF(v)\) for
  \(t>0\), alone can be already sufficient, see
  Example~\ref{ex:maximal-function}.
\end{rem}
\begin{rem}
  \label{re:range-dim} 
  It looks like the usefulness of the covariant transform is in the
  reverse proportion to the dimension of the space \(U\). The
  covariant transform encodes properties of \(v\) in a function
  \(\oper{W}_F^{\uir{}{}}v\) on \(G\), which is a scalar-valued function if
  \(\dim U=1\). However, such a simplicity is not always possible.
  Moreover, the paper~\cite{Kisil12b} gives an important example of a
  covariant transform which provides a simplification even in the case
   \(\dim U =\dim V\).
\end{rem}

We start the list of examples with the classical case of the group-theoretical
wavelet transform.
\begin{example}\cites{Perelomov86,FeichGroech89a,Kisil98a,AliAntGaz00,%
 KlaSkag85,FeichGroech89a} 
  \label{ex:wavelet}
  Let \(V\) be a Hilbert space with an inner product
  \(\scalar{\cdot}{\cdot}\) and \(\uir{}{}\) be a unitary
  representation of a group \(G\) in the space \(V\). Let \(F: V
  \rightarrow \Space{C}{}\) be the functional \(v\mapsto
  \scalar{v}{v_0}\) defined by a vector \(v_0\in V\). The vector
  \(v_0\) is often called the \emph{mother wavelet}%
  \index{mother wavelet}%
  \index{wavelet!mother} in areas related
  to signal processing, the \emph{vacuum state}%
  \index{vacuum state|see{mother wavelet}}%
  \index{state!vacuum|see{mother wavelet}} in the quantum
  framework, etc.

  In this set-up, transformation~\eqref{eq:coheret-transf-gen} is
  the well-known expression for a \emph{wavelet transform}%
  \index{wavelet!transform}%
  \index{transform!wavelet}~\cite{AliAntGaz00}*{(7.48)} (or
  \emph{representation coefficients}%
  \index{representation!coefficients|see{wavelet transform}}):
  \begin{equation}
    \label{eq:wavelet-transf}
    \oper{W}: v\mapsto \tilde {v}(g) = \scalar{\uir{}{}(g^{-1})v}{v_0}  =
    \scalar{ v}{\uir{}{}(g)v_0}, \qquad
    v\in V,\ g\in G.
  \end{equation}
  The family of the vectors \(v_g=\uir{}{}(g)v_0\) is called
  \emph{wavelets}\index{wavelet} or \emph{coherent states}. The image
  of \eqref{eq:wavelet-transf} consists of
  scalar valued functions on \(G\).
\end{example}

This scheme is typically carried out for a square integrable
representation \(\uir{}{}\)%
\index{square integrable!representation}%
\index{representation!square integrable} with \(v_0\) being an
admissible vector%
\index{admissible wavelet}%
\index{wavelet!admissible}~\cites{Perelomov86,FeichGroech89a,%
AliAntGaz00,Fuhr05a,ChristensenOlafsson09a,DufloMoore},
i.e. satisfying the condition:
\begin{equation}
  \label{eq:square-integrable}
  0< \norm{\tilde {v}_0}^2=\int_{G}
  \modulus{\scalar{v_0}{\uir{}{2}(g) v_0}}^2\,dg<\infty.
\end{equation}
In this case the wavelet (covariant) transform is a map into the
square integrable functions~\cite{DufloMoore} with respect to the left
Haar measure%
\index{invariant!measure}%
\index{measure!invariant} on \(G\). The map becomes an isometry if \(v_0\) is
properly scaled. Moreover, we are able to recover the input \(v\) from its
wavelet transform through the reconstruction formula, which requires an
admissible vector as well, see Example~\ref{ex:Haar-pairing} below.
The most popularized case of the above scheme is
provided by the affine group. 

\begin{example}
  \label{ex:ax+b}
  For the \(ax+b\) group, 
  representation~\eqref{eq:ax+b-repr-quasi-reg} is square integrable for
  \(p=2\). Any function \(v_0\), such that its Fourier transform
  \(\hat{v}_0(\lambda )\) satisfies 
  \begin{equation}
    \label{eq:ax+b-admiss-cond}
    \int\limits_0^\infty \frac{\modulus{\hat{v}_0(\lambda
        )}^2}{\lambda }\,d\lambda 
    < \infty,
  \end{equation}
  is admissible in the sense of
  \eqref{eq:square-integrable}~\cite{AliAntGaz00}*{\S~12.2}. The
  \emph{continuous wavelet transform}%
  \index{continuous!wavelet transform}%
  \index{wavelet!transform!continuous}%
  \index{transform!wavelet!continuous} is generated by
  representation~\eqref{eq:ax+b-repr-quasi-reg} acting on an
  admissible vector \(v_0\) in
  expression~\eqref{eq:wavelet-transf}. The image of a function from
  \(\FSpace{L}{2}(\Space{R}{})\) is a function on the upper half-plane
  square integrable with respect to the measure \(a^{-2}\,da\,db\).
  There are many examples~\cite{AliAntGaz00}*{\S~12.2} of useful
  admissible vectors, say, the \emph{Mexican hat} wavelet%
  \index{Mexican hat!wavelet}%
  \index{wavelet!Mexican hat}: \((1-x^2)e^{-x^2/2}\).  For
  sufficiently regular \(\hat{v}_0\) 
  admissibility~\eqref{eq:ax+b-admiss-cond} of \(v_0\) follows by a
  weaker condition
  \begin{equation}
    \label{eq:admissibility-weaker}
    \int_{\Space{R}{}}  v_0(x)\,dx=0.
  \end{equation}
We dedicate
  Section~\ref{sec:transport-norm} to isometric properties of this
  transform.
\end{example}

However, square integrable representations and admissible vectors do
not cover all interesting cases.
\begin{example}
  \label{ex:cauchy-integral}
  For the above \(G=\mathrm{Aff}\) and
  representation~\eqref{eq:ax+b-repr-quasi-reg}, we consider the operators
  \(F_{\pm}:\FSpace{L}{p}(\Space{R}{}) \rightarrow \Space{C}{}\)
  defined by:
  \begin{equation}
    \label{eq:cauchy-pm}
    F_{\pm}(f)=\frac{1}{\pi \rmi}\int_{\Space{R}{}} \frac{f(x)\,dx}{\rmi\mp x}.
  \end{equation}
  In \(\FSpace{L}{2}(\Space{R}{})\) we note that
  \(F_+(f)=\scalar{f}{c}\), where
  \(c(x)=\frac{1}{\pi\rmi}\frac{1}{\rmi+x}\). Computing the Fourier
  transform \(\hat{c}(\lambda)=\chi_{(0,+\infty)}(\lambda)\, e^{-\lambda }\), we see that
  \(\bar{c} \in \FSpace{H}{2}(\Space{R}{})\). Moreover, \(\hat{c}\)
  does not satisfy  admissibility
  condition~\eqref{eq:ax+b-admiss-cond} for
  representation~\eqref{eq:ax+b-repr-quasi-reg}.

  Then, covariant transform~\eqref{eq:coheret-transf-gen} is 
  Cauchy integral%
  \index{integral!Cauchy}%
  \index{Cauchy!integral}~\eqref{eq:cauchy-int} from \(\FSpace{L}{p}(\Space{R}{})\) to the
  space of functions \(\tilde{f}(a,b)\) such that
  \(a^{-\frac{1}{p}}\tilde{f}(a,b)\) is in the Hardy space%
  \index{space!Hardy}%
  \index{Hardy!space} on the upper/lower half-plane
  \(\FSpace{H}{p}(\Space[\pm]{R}{2})\)~\cite{Nikolski02a}*{\S~A.6.3}.
  Due to inadmissibility of \(c(x)\), the
  complex analysis become decoupled from the traditional wavelet
  theory. 

  Many important objects in harmonic analysis are generated by
  inadmissible mother wavelets like~\eqref{eq:cauchy-pm}. For example,
  the functionals \( P=\frac{1}{2}( F_+ +F_-)\) and \( Q=\frac{1}{2\rmi}( F_+
  -F_-)\) are defined by kernels:
  \begin{align}
    \label{eq:poisson-kernel}
    p(x)&=\frac{1}{2\pi \rmi}\left(\frac{1}{\rmi-x}-
    \frac{1}{\rmi+x}\right)=\frac{1}{\pi }\frac{1}{1+x^2},\\
    \label{eq:conj-poisson-kernel}
    q(x)&=-\frac{1}{2\pi }\left(\frac{1}{\rmi-x}-
    \frac{1}{\rmi+x}\right)=-\frac{1}{\pi }\frac{x}{1+x^2}
  \end{align}
  which are \emph{Poisson kernel}%
  \index{Poisson kernel}%
  \index{kernel!Poisson}%
  \index{integral!Poisson|see{Poisson kernel}}~\eqref{eq:poisson-int}
  and the \emph{conjugate Poisson kernel}%
  \index{Poisson kernel!conjugated}%
  \index{kernel!Poisson!conjugated}~\citelist{\cite{Grafakos08}*{\S~4.1}
    \cite{Garnett07a}*{\S~III.1} \cite{Koosis98a}*{Ch.~5}
    \cite{Nikolski02a}*{\S~A.5.3}}, respectively.  Another interesting
   non-admissible vector is the \emph{Gaussian}\index{Gaussian}
  \(e^{-x^2}\).
\end{example}
\begin{example}
  \label{ex:maximal-function}
  A step in a different direction is a consideration of
  non-linear operators. Take again the \(ax+b\) group%
  \index{$ax+b$ group}%
  \index{group!$ax+b$} and its
  representation~\eqref{eq:ax+b-repr-quasi-reg}. 
  We define \(F\) to be a homogeneous (but non-linear) functional
  \(V\rightarrow \Space[+]{R}{}\):
  \begin{equation}
    \label{eq:F-modulus-def}
    F_m (f) = \frac{1}{2}\int\limits_{-1}^1 \modulus{f(x)}\,dx.
  \end{equation}
  Covariant transform~\eqref{eq:coheret-transf-gen} becomes:
  \begin{equation}
    \label{eq:hardy-max}
    [\oper{W}^m_p f](a,b) =  F(\uir{}{p}({\textstyle\frac{1}{a},-\frac{1}{b}}) f) 
    = \frac{1}{2}\int\limits_{-1}^1
    \modulus{a^{\frac{1}{p}}f\left(ax+b\right)}\,dx
    = \frac{a^{\frac{1}{q}}}{2}\int\limits^{b+a}_{b-a}
    \modulus{f\left(x\right)}\,dx,
  \end{equation}
  where \(\frac{1}{p}+\frac{1}{q}=1\), as usual.
  We will see its connections with the Hardy--Littlewood maximal
  functions%
  \index{Hardy--Littlewood!maximal functions}%
  \index{maximal functions!Hardy--Littlewood} in
  Example~\ref{ex:hardy-littlewood}. 
\end{example}
Since linearity has clear advantages, we may prefer to reformulate the
last example using linear covariant transforms. The idea is similar
to the representation of a convex function as an envelope of linear
ones, cf.~\cite{Garnett07a}*{Ch.~I, Lem.~6.1}. To this end, we take a
collection \(\mathbf{F}\) of linear fiducial functionals and, for a
given function \(f\), consider the set of all covariant transforms
\(\oper{W}_F f\), \(F\in\mathbf{F}\).
\begin{example}
  Let us return to the setup of the previous Example for
  \(G=\mathrm{Aff}\) and its
  representation~\eqref{eq:ax+b-repr-quasi-reg}. Consider the unit ball \(B\)
  in \(\FSpace{L}{\infty}[-1,1]\). Then, any \(\omega\in B\) defines a
  bounded linear functional \(F_\omega\) on \(\FSpace{L}{1}(\Space{R}{})\):
  \begin{equation}
    \label{eq:hyperbolic-functional}
    F_\omega(f)= \frac{1}{2}\int\limits_{-1}^1 f(x)\, \omega(x)\,dx
    =  \frac{1}{2}\int_{\Space{R}{}} f(x)\, \omega(x)\,dx.
  \end{equation}
  Of course, \(\sup_{\omega\in B} F_\omega(f)= F_m(f)\) with \(F_m\)
  from~\eqref{eq:F-modulus-def} and for all
  \(f\in\FSpace{L}{1}(\Space{R}{})\). Then, for the non-linear
  covariant transform~\eqref{eq:hardy-max} 
  we have the following expression in terms of the linear covariant transforms
  generated by \(F_\omega\):
  \begin{equation}
    \label{eq:nl-through-sup}
    [\oper{W}^m_1 f](a,b) = \sup_{\omega\in B} \,  [\oper{W}^\omega _1
      f](a,b).
  \end{equation}
  The presence of suprimum is the price to pay for such a ``linearization''.
\end{example}
\begin{rem}
  \label{re:grand-maximal-funct}
  The above construction is not much different to the 
  \emph{grand maximal function}%
  \index{grand maximal function}%
  \index{maximal function!grand}~\cite{Stein93}*{\S~III.1.2}.
  Although, it may look like a generalisation of covariant transform,
  grand maximal function can be realised as a particular case of
  Defn.~\ref{de:covariant-transform}. Indeed, let \(M(V)\) be a
  subgroup of the group of all invertible isometries of a metric space
  \(V\). If \(\uir{}{}\) represents a group  \(G\) by isometries of \(V\)
  then we can consider the group \(\tilde{G}\) generated by all finite products of
  \(M(V)\) and \(\uir{}{}(g)\), \(g\in G\) with the straightforward
  action \(\uir[2]{}{}\) on \(V\). The grand maximal functions
  is produced by the covariant transform for the representation
  \(\uir[2]{}{}\) of \(\tilde{G}\).
\end{rem}
\begin{rem}
  \label{re:hypercomplex-numbers}
  It is instructive to compare action~\eqref{eq:sl2r-action-rl} of
  the large \(\SL\)\index{$\SL$}%
  \index{group!$\SL$} group on the mother wavelet \(\frac{1}{x+\rmi}\)
  for the Cauchy integral and the principal case
  \(\omega(x)=\chi_{[-1,1]}(x)\) (the characteristic function of
  \([-1,1]\)) for functional~\eqref{eq:hyperbolic-functional}. The
  wavelet \(\frac{1}{x+\rmi}\) is an eigenvector for all matrices
  \(\begin{pmatrix} \cos t & \sin t\\ -\sin t& \cos t
  \end{pmatrix}\), which form the one-parameter compact subgroup
  \(K\subset\SL\). The respective covariant transform (i.e., the
  Cauchy integral) maps functions to the homogeneous space \(\SL/K\),
  which is the upper half-plane with the M\"obius (linear-fractional)
  transformations of complex numbers%
  \index{complex numbers}%
  \index{numbers!complex}~\citelist{\cite{Kisil05a}
    \cite{Kisil12a}*{\S~3.3.4} \cite{Kisil11c}*{\S~3}}. By contrast,
  the mother wavelet \(\chi_{[-1,1]}\) is an eigenvector for all
  matrices \(\begin{pmatrix} \cosh t & \sinh t\\ \sinh t& \cosh t
  \end{pmatrix}\), which form the one-parameter subgroup
  \(A\in\SL\). The covariant transform (i.e., the averaging) maps
  functions to the homogeneous space \(\SL/A\), which can be
  identified with a set of double numbers%
  \index{double numbers}%
  \index{numbers!double} with corresponding M\"obius
  transformations~\citelist{\cite{Kisil05a} \cite{Kisil12a}*{\S~3.3.4}
    \cite{Kisil11c}*{\S~3}}. Conformal geometry of double numbers is
  suitable for real variables technique, in particular,
  tents\index{tent}~\cite{Stein93}*{\S~II.2} make a M\"obius-invariant
  family.
\end{rem}

\section{\large The contravariant transform}
\label{sec:invar-funct-groups}

Define the left action \(\Lambda\) of a group \(G\) on a space of
functions over \(G\) by:
\begin{equation}
  \label{eq:left-reg-repr}
  \Lambda(g): f(h) \mapsto f(g^{-1}h).
\end{equation}
For example, in the case of the affine group it is~\eqref{eq:ax+b-left-regular}.
An object invariant under the left action
\(\Lambda\) is called \emph{left invariant}.\index{invariant}
In particular, let \(L\) and \(L'\) be two left invariant spaces of
functions on \(G\).  We say that a pairing \(\scalar{\cdot}{\cdot}:
L\times L' \rightarrow \Space{C}{}\) is \emph{left invariant}%
\index{invariant!pairing}%
\index{pairing!invariant} if
\begin{equation}
  \label{eq:invariance-pairing}
  \scalar{\Lambda(g)f}{\Lambda(g) f'}= \scalar{f}{f'}, \quad \textrm{ for all }
  \quad f\in L,\  f'\in L', \ g\in G.
\end{equation}
\begin{rem}
  \begin{enumerate}
  \item We do not require the pairing to be linear in general, in some
    cases it is sufficient to have only homogeneity, see
    Example~\ref{ex:maximal-function-inv-wavelet}. 
  \item If the pairing is invariant on space \(L\times L'\) it is not
    necessarily invariant (or even defined) on large spaces  of
    functions.
  \item In some cases, an invariant pairing on \(G\) can be obtained
    from an \emph{invariant functional}%
    \index{invariant!functional}%
    \index{functional!invariant} \(l\) by the formula
    \(\scalar{f_1}{f_2}=l(f_1 {f}_2)\).
  \end{enumerate}
\end{rem}

For a representation \(\uir{}{}\) of \(G\) in \(V\) and \(w_0\in V\),
we construct a function \(w(g)=\uir{}{}(g)w_0\) on \(G\). We assume
that the pairing can be extended in its second component to this
\(V\)-valued functions. For example, such an extension can be defined
in the weak sense.
\begin{defn}\cites{Kisil09d,Kisil11c}
  \label{de:admissible}
  Let \(\scalar{\cdot}{\cdot}\) be a left invariant pairing on
  \(L\times L'\) as above, let \(\uir{}{}\) be a representation of
  \(G\) in a space \(V\), we define the function
  \(w(g)=\uir{}{}(g)w_0\) for \(w_0\in V\) such that \(w(g)\in L'\) in
  a suitable sense. The \emph{contravariant transform}%
  \index{contravariant!transform}%
  \index{transform!contravariant}%
  \index{inverse!covariant transform|see{contravariant transform}}%
  \index{covariant!transform!inverse|see{contravariant transform}}%
  \index{transform!covariant!inverse|see{contravariant transform}}
  \(\oper{M}_{w_0}^{\uir{}{}}\) is a map \(L \rightarrow V\) defined
  by the pairing:
  \begin{equation}
    \label{eq:inv-cov-trans}
    \oper{M}_{w_0}^{\uir{}{}}: f \mapsto \scalar{f}{w}, \qquad \text{
      where } f\in L. 
  \end{equation}
\end{defn} 
We can drop out sup/subscripts in \(\oper{M}_{w_0}^{\uir{}{}}\) as we
did for  \(\oper{W}_{F}^{\uir{}{}}\).

\begin{example}[Haar paring]
  \label{ex:Haar-pairing}
  The most used example of an invariant pairing on
  \(\FSpace{L}{2}(G,d\mu)\times \FSpace{L}{2}(G,d\mu)\)
  is the integration with respect to the Haar measure:%
  \index{Haar measure|see{invariant measure}}%
  \index{measure!Haar|see{invariant measure}}%
  \index{invariant!measure}%
  \index{measure!invariant}
   \begin{equation}
     \label{eq:haar-pairing}
     \scalar{f_1}{f_2}=\int_G f_1(g) {f}_2(g)\,dg.
   \end{equation}
   If \(\uir{}{}\) is a square integrable representation%
   \index{square integrable!representation}%
   \index{representation!square integrable} of \(G\) and \(w_0\) is an
   admissible vector%
   \index{admissible wavelet}%
   \index{wavelet!admissible}, see Example~\ref{ex:wavelet}, then this
   pairing can be extended to \(w(g)=\uir{}{}(g) w_0\). The 
   contravariant transform is known in this setup as the
   \emph{reconstruction formula}%
   \index{reconstruction formula}%
   \index{formula!reconstruction}, cf.~\cite{AliAntGaz00}*{(8.19)}:
   \begin{equation}
     \label{eq:wavelet-reconstruction}
     \oper{M}_{w_0} f =\int _G f(g)\, w(g)\,dg,
     \qquad\text{  where } w(g)=\uir{}{}(g) w_0.
   \end{equation}
   It is possible to use different admissible vectors \(v_0\) and
   \(w_0\) for wavelet
   transform~\eqref{eq:wavelet-transf} and reconstruction
   formula~\eqref{eq:wavelet-reconstruction}, respectively, cf.
   Example~\ref{ex:duflo-moor}.
 \end{example}
 

 Let either
 \begin{itemize}
 \item \(\uir{}{}\) be not a square integrable representation (even modulo
   a subgroup); \emph{or}
 \item \(w_0\) be an inadmissible vector of a square integrable
   representation \(\uir{}{}\).
 \end{itemize}
 A suitable invariant pairing in this case is not associated with 
 integration over the Haar measure on \(G\). In this
 case we speak about a \emph{Hardy pairing}%
\index{Hardy!pairing}%
\index{pairing!Hardy}. The following example explains
 the name.
\begin{example}[Hardy pairing]
  Let \(G\) be the \(ax+b\) group%
  \index{$ax+b$ group}%
  \index{group!$ax+b$} and its representation
  \(\uir{}{}\)~\eqref{eq:ax+b-repr-quasi-reg} in Example~\ref{ex:ax+b}. An
  invariant pairing on \(G\), which is not generated by the Haar
  measure%
   \index{invariant!measure}%
   \index{measure!invariant} \(a^{-2}da\,db\), is:
  \begin{equation}
    \label{eq:hardy-pairing}
    \scalar[H]{f_1}{f_2}=
    \lim_{a\rightarrow 0} \int\limits_{-\infty}^{\infty}
    f_1(a,b)\,{f}_2(a,b)\,\frac{db}{a}.
  \end{equation}
  For this pairing, we can consider functions \(\frac{1}{\pi \rmi}\frac{1}{
    x+\rmi}\) or \(e^{-x^2}\), which are not admissible vectors in the
  sense of square integrable representations. For example, for
  \(v_0=\frac{1}{\pi \rmi}\frac{1}{ x+\rmi}\) we obtain:
  \begin{displaymath}
    [\oper{M}f](x)=
    \lim_{a\rightarrow 0} \int\limits_{-\infty}^{\infty}
    f(a,b)\, \frac{a^{-\frac{1}{p}}}{\pi \rmi (x+\rmi a-b)} \,db
   =  -\lim_{a\rightarrow 0} \frac{a^{-\frac{1}{p}}}{\pi \rmi}\int\limits_{-\infty}^{\infty}
    \frac{f(a,b)\,db}{b-(x+\rmi a)} .
  \end{displaymath}
  In other words, it expresses the boundary values at \(a=0\) of the Cauchy integral
  \([-\oper{C}f](x+\rmi a)\).
\end{example}


Here is an important example of non-linear pairing.
\begin{example}
  \label{ex:maximal-function-inv-wavelet}
  Let \(G=\mathrm{Aff}\) and an invariant homogeneous functional
  on \(G\) be given by the \(\FSpace{L}{\infty}\)-version of Haar
  functional~\eqref{eq:haar-pairing}: 
  \begin{equation}
    \label{eq:sup-functional-Aff}
    \scalar[\infty]{f_1}{f_2}=\sup_{g\in G}\modulus{f_1(g){f}_2(g)}.
  \end{equation}
 Define the following two functions on \(\Space{R}{}\):
 \begin{equation}
   \label{eq:v-plus-v-star}
    {v}^+_0(t)=\left\{
      \begin{array}{ll}
        1,&\text{ if } t=0;\\
        0,&\text{ if } t\neq 0,
      \end{array}
    \right.\quad \text{ and }\quad
    v_0^*(t)=\left\{
      \begin{array}{ll}
        1,&\text{ if } \modulus{t}\leq 1;\\
        0,&\text{ if } \modulus{t}> 1 .
      \end{array}
    \right. 
 \end{equation}
 The respective contravariant transforms are generated by
 representation \(\uir{}{\infty}\)~\eqref{eq:ax+b-repr-quasi-reg} are:
 \begin{eqnarray}
   \label{eq:vert-maximal-function}
   [ \oper{M}_{{v}^+_0}f](t)&=&f^+(t)=\scalar[\infty]{f(a,b)}{\uir{}{\infty}(a,b)
     {v}_0^+(t)}=
   \sup_{a}\modulus{f(a,t)},\\{}
   \label{eq:nt-maximal-function}
   [ \oper{M}_{v^*_0}f](t)&=&f^*(t)=\scalar[\infty]{f(a,b)}{\uir{}{\infty}(a,b)
     v_0^*(t)}=
   \sup_{a>\modulus{b-t}}\modulus{f(a,b)}.
 \end{eqnarray}
 Transforms~\eqref{eq:vert-maximal-function}
 and~\eqref{eq:nt-maximal-function} are the \emph{vertical}%
  \index{vertical!maximal functions}%
  \index{maximal functions!vertical} and
 \emph{non-tangential maximal
   functions}%
  \index{non-tangential!maximal functions}%
  \index{maximal functions!non-tangential}~\cite{Koosis98a}*{\S~VIII.C.2}, respectively.
\end{example}
\begin{example}
  \label{ex:limit-function-inv-wavelet}
  Consider again \(G=\mathrm{Aff}\) equipped now with an
  invariant linear functional, which is a Hardy-type modification
  (cf.~\eqref{eq:hardy-pairing}) of
  \(\FSpace{L}{\infty}\)-functional~\eqref{eq:sup-functional-Aff}:
  \begin{equation}
    \label{eq:hardy-infinity}
    \scalar[\stackrel{H}{\infty}]{f_1}{f_2}={\varlimsup_{a\rightarrow
        0}}\,\sup_{b\in\Space{R}{}}(f_1(a,b) {f}_2(a,b)),
  \end{equation}
  where \(\varlimsup\) is the upper limit. Then, the covariant
  transform \(\oper{M}^H\) for this pairing for functions \(v^+\) and
  \(v^*\)~\eqref{eq:v-plus-v-star} becomes:
 \begin{eqnarray}
   \label{eq:norm-limit}
   [ \oper{M}_{{v}^+_0}^Hf](t)
   &=&\scalar[\stackrel{H}{\infty}]{f(a,b)}{\uir{}{\infty}(a,b)
     {v}_0^+(t)}=
   \varlimsup_{a\rightarrow 0}f(a,t),\\
   \label{eq:nt-limit}
   [ \oper{M}_{v^*_0}^Hf](t)
   &=&\scalar[\stackrel{H}{\infty}]{f(a,b)}{\uir{}{\infty}(a,b)
     v_0^*(t)}=
   \varlimsup_{
     \substack{
        a\rightarrow 0\\
        \modulus{b-t}<a}} f(a,b).
 \end{eqnarray}
  They are the \emph{normal} and
  \emph{non-tangential} upper limits from the upper-half plane to
  the real line, respectively.
\end{example}

Note the obvious inequality \(\scalar[\infty]{f_1}{f_2} \geq
\scalar[\stackrel{H}{\infty}]{f_1}{f_2}\) between
pairings~\eqref{eq:sup-functional-Aff} and~\eqref{eq:hardy-infinity},
which produces the corresponding relation between respective 
contravariant transforms.

There is an explicit duality between the covariant transform and the
contravariant transform. Discussion of the grand maximal function in the
Rem.~\ref{re:grand-maximal-funct} shows usefulness of the covariant
transform over a family of fiducial functionals. Thus, we shall not
be surprised by the contravariant transform over a family of
reconstructing vectors as well.
\begin{defn}
  Let \(w: \mathrm{Aff} \rightarrow \FSpace{L}{1}(\Space{R}{})\)
  be a function.  We define a new function \(\uir{}{1}w\) on
  \(\mathrm{Aff}\) with values in \(\FSpace{L}{1}(\Space{R}{})\)
  via the point-wise action \([\uir{}{1}
  w](g)=\uir{}{1}(g) w(g)\) of
  \(\uir{}{\infty}\)~\eqref{eq:ax+b-repr-quasi-reg}. If
  \(\sup_g\norm[1]{w(g)}< \infty\), then, for
  \(f\in\FSpace{L}{1}(\mathrm{Aff})\), we define the \emph{extended
    contravariant transform} by:
  \begin{equation}
    \label{eq:extend-inverse-covar-trans}
    [\oper{M}_w f](x)=\int_{\mathrm{Aff}} f(g)\, [\uir{}{1} w](g)\,dg.
  \end{equation}
\end{defn}
Note, that \eqref{eq:extend-inverse-covar-trans} reduces to the 
contravariant transform~\eqref{eq:wavelet-reconstruction} if we start from
the constant function \(w(g)=w_0\).
\begin{defn}
  We call a function \(r\) on \(\Space{R}{}\) a \emph{nucleus}%
  \index{nucleus} if:
  \begin{enumerate}
  \item \(r\) is supported in \([-1,1]\),
  \item \(\modulus{r}<\frac{1}{2}\) almost everywhere, and
  \item \(\int_{\Space{R}{}} r(x)\,dx=0\), cf.~\eqref{eq:admissibility-weaker}.
  \end{enumerate}
\end{defn}
Clearly, for a nucleus \(r\), the function \(s=\uir{}{1}(a,b) r\) has
the following properties:
\begin{enumerate}
\item \(s\) is supported in a ball centred at \(b\) and radius \(a\), 
\item \(\modulus{s}<\frac{1}{2a}\) almost everywhere, and 
\item \(\int_{\Space{R}{}} s(x)\,dx=0\).
\end{enumerate}
In other words, \(s=\uir{}{1}(a,b) r\) is an \emph{atom}%
\index{atom}, cf.~\cite{Stein93}*{\S~III.2.2} and any atom may be
obtained in this way from some nucleus and certain \((a,b)\in\mathrm{Aff}\).
\begin{example}
  \label{ex:atomic-decomposition}
  Let \(f(g)=\sum_j \lambda_j \delta_{g_j}(g)\) with \(\sum_j
  \modulus{\lambda_j}<\infty\) be a countable sum of point masses on
  \(\mathrm{Aff}\). If all values of \(w(g_j)\) are nucleuses,
  then~\eqref{eq:extend-inverse-covar-trans} becomes:
  \begin{equation}
    \label{eq:atomic-decomp-gen}
    [\oper{M}_w f](x)=\int_{\mathrm{Aff}} f(g)\, [\uir{}{1} w](g)\,dg
    =\sum_j \lambda_j s_j,
  \end{equation}
  where \(s_j=\uir{}{1}(g_j) w(g_j)\) are atoms.
 The right-hand side of \eqref{eq:atomic-decomp-gen} is known as an \emph{atomic
  decomposition}%
\index{atomic!decomposition}%
\index{decomposition!atomic} of a function \(h(x)=[\oper{M}_w
  f](x)\), see~\cite{Stein93}*{\S~III.2.2}.
\end{example}

\section{\large Intertwining properties of covariant transforms}
\label{sec:interw-prop-covar}

The covariant transform has obtained its name because of the following property.
\begin{thm}\cites{Kisil09d,Kisil11c}
  \label{pr:inter1} 
  Covariant transform~\eqref{eq:coheret-transf-gen}
  intertwines%
  \index{intertwining operator}%
  \index{operator!intertwining} \(\uir{}{}\) and the left regular representation
  \(\Lambda\)~\eqref{eq:left-reg-repr}  on \(\FSpace{L}{}(G,U)\):
  \begin{equation}
    \label{eq:cov-trans-intertwine}
    \oper{W} \uir{}{}(g) = \Lambda(g) \oper{W}.
  \end{equation}
\end{thm}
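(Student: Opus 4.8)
The plan is to verify the intertwining relation \eqref{eq:cov-trans-intertwine} directly from the definition \eqref{eq:coheret-transf-gen} of the covariant transform, using only the fact that \(\uir{}{}\) is a representation (so that \(\uir{}{}(g_1)\uir{}{}(g_2)=\uir{}{}(g_1g_2)\) and \(\uir{}{}(g^{-1})=\uir{}{}(g)^{-1}\)) together with the formula \eqref{eq:left-reg-repr} for the left regular action \(\Lambda\). The statement is an identity between two operators \(V\to \FSpace{L}{}(G,U)\), so it suffices to evaluate both sides on an arbitrary \(v\in V\) and then at an arbitrary point \(h\in G\).

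First I would compute the left-hand side. By definition, \(\oper{W}\uir{}{}(g)\) sends \(v\) to the function whose value at \(h\in G\) is \(F\bigl(\uir{}{}(h^{-1})\,\uir{}{}(g)\,v\bigr)\). Using that \(\uir{}{}\) is a homomorphism, \(\uir{}{}(h^{-1})\uir{}{}(g)=\uir{}{}(h^{-1}g)=\uir{}{}((g^{-1}h)^{-1})\), so this equals \(F\bigl(\uir{}{}((g^{-1}h)^{-1})\,v\bigr)=\hat v(g^{-1}h)\), where \(\hat v=\oper{W}v\). Next I would compute the right-hand side: \(\Lambda(g)\oper{W}\) sends \(v\) to \(\Lambda(g)\hat v\), whose value at \(h\) is, by \eqref{eq:left-reg-repr}, exactly \(\hat v(g^{-1}h)\). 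The two expressions agree for all \(h\), hence the functions agree, hence the operators agree; this is \eqref{eq:cov-trans-intertwine}.

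There is essentially no obstacle here: the only subtlety worth a remark is that \(F\) is not assumed linear, but linearity of \(F\) is never used — the argument only manipulates the argument of \(F\) via the group law, so the non-linear (or merely positively homogeneous) case is covered verbatim. A second minor point is the bookkeeping of inverses: one must be careful that the "\(g^{-1}\)" appearing inside \eqref{eq:coheret-transf-gen} is what turns the anti-homomorphism-looking composition into the genuine left action \(\Lambda\) rather than a right action; tracking this is the one place to slow down, but it is routine. I would therefore present the proof as the short chain of equalities above, indexed by the auxiliary variable \(h\in G\), and conclude.
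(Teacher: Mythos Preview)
Your proof is correct and is exactly the standard direct verification; the paper itself does not spell out a proof for this theorem but merely cites \cites{Kisil09d,Kisil11c}, and the argument there is the same chain of equalities you give.
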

\begin{cor}\label{co:pi}
  The image space \(\oper{W}(V)\) is invariant under the
  left shifts on \(G\).
\end{cor}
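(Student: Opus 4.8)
The plan is to read this off directly from the intertwining relation~\eqref{eq:cov-trans-intertwine} just established in Theorem~\ref{pr:inter1}. Invariance of a subspace under a family of operators is exactly the sort of statement that becomes mechanical once one knows how those operators commute (up to the given twist) with a map \emph{onto} that subspace, so no new ideas beyond the theorem are needed.

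Concretely, first I would fix an arbitrary element of the image, say \(\hat v=\oper{W}v\) with \(v\in V\), together with an arbitrary \(g\in G\). Applying the left shift \(\Lambda(g)\) and invoking~\eqref{eq:cov-trans-intertwine} gives \(\Lambda(g)\hat v=\Lambda(g)\oper{W}v=\oper{W}\,\uir{}{}(g)v\). Since \(\uir{}{}(g)\) maps \(V\) into \(V\), the vector \(\uir{}{}(g)v\) again lies in \(V\), and hence \(\oper{W}\bigl(\uir{}{}(g)v\bigr)\in\oper{W}(V)\) by the very definition of the image space. Therefore \(\Lambda(g)\hat v\in\oper{W}(V)\), which is precisely the claimed invariance of \(\oper{W}(V)\) under all left shifts.

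I do not expect any real obstacle. The only minor point worth noting is that the conclusion is required for every \(g\in G\) individually, and the displayed computation already delivers that for each \(g\) separately, so there is no need to pass through a generating set or worry about closures. If desired, one can additionally remark that \(\Lambda\) restricted to \(\oper{W}(V)\) is, through \(\oper{W}\), an intertwined copy of \(\uir{}{}\); but that sharper reformulation would need injectivity of \(\oper{W}\), which plays no part in the bare invariance statement of the corollary.
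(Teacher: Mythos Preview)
Your argument is correct and is exactly the intended one: the paper states this as an immediate corollary of Theorem~\ref{pr:inter1} without further proof, and your one-line computation \(\Lambda(g)\oper{W}v=\oper{W}\,\uir{}{}(g)v\in\oper{W}(V)\) is precisely how that implication is read off.
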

The covariant transform is also a natural source of \emph{relative
  convolutions}%
\index{relative!convolution}%
\index{convolution!relative}~\cites{Kisil94e,Kisil13a}, which are
operators \(A_k=\int_G k(g)\uir{}{}(g)\,dg\) obtained by integration a
representation \(\uir{}{}\) of a group \(G\) with a suitable kernel
\(k\) on \(G\). In particular, inverse wavelet transform
\(\oper{M}_{w_0} f\)~\eqref{eq:wavelet-reconstruction} can be defined
from the relative convolution \(A_f\) as well: \(\oper{M}_{w_0} f= A_f
w_0\).
\begin{cor}
  Covariant transform~\eqref{eq:coheret-transf-gen} intertwines
  the operator of convolution \(K\) (with kernel \(k\)) and the operator
  of relative convolution \(A_k\), i.e. \(K \oper{W}= \oper{W} A_k\).
\end{cor}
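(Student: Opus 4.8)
The plan is to deduce the identity directly from the intertwining property of Theorem~\ref{pr:inter1}, viewing both convolution operators as integrated representations. First I would observe that the convolution operator $K$ with kernel $k$, acting on functions on $G$, is nothing but the relative convolution $\int_G k(g)\,\Lambda(g)\,dg$ formed from the left regular representation~\eqref{eq:left-reg-repr}: indeed $[K\phi](h)=\int_G k(g)\,\phi(g^{-1}h)\,dg=\int_G k(g)\,[\Lambda(g)\phi](h)\,dg$. Likewise $A_k=\int_G k(g)\,\uir{}{}(g)\,dg$ by the very definition of the relative convolution.

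Granting this, the computation is a single line modulo one interchange of operations:
\begin{equation*}
  \oper{W} A_k = \oper{W}\int_G k(g)\,\uir{}{}(g)\,dg
  = \int_G k(g)\,\oper{W}\uir{}{}(g)\,dg
  = \int_G k(g)\,\Lambda(g)\,\oper{W}\,dg
  = \left(\int_G k(g)\,\Lambda(g)\,dg\right)\oper{W} = K\oper{W},
\end{equation*}
where the middle equality is exactly $\oper{W}\uir{}{}(g)=\Lambda(g)\oper{W}$ from Theorem~\ref{pr:inter1}. Alternatively, one can check the identity pointwise: evaluating $[\oper{W}A_k v](h)$ and $[K\oper{W}v](h)$ at $h\in G$ and applying $F$ after using $\uir{}{}(h^{-1})\uir{}{}(g)=\uir{}{}(h^{-1}g)$ reduces both sides to $\int_G k(g)\,F(\uir{}{}(h^{-1}g)v)\,dg$, so the two formulations carry the same content.

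The only real obstacle is legitimising the step in which $\oper{W}$ (equivalently, the fiducial operator $F$) is pulled inside the integral defining $A_k$. This needs $F$ to be linear and continuous in a topology for which the vector-valued integral $\int_G k(g)\,\uir{}{}(g)v\,dg$ converges --- which is precisely the meaning of ``$k$ a suitable kernel'' that makes $A_k$ well defined in the first place (for instance $k\in\FSpace{L}{1}(G)$ together with strong continuity and uniform boundedness of $\uir{}{}$). Under such hypotheses the interchange is justified either by the Bochner-integral version of the fact that a bounded linear map commutes with integration, or, in the pointwise formulation above, by Fubini's theorem; no feature specific to the $ax+b$ group is used, so the statement holds for a general $G$ exactly as phrased.
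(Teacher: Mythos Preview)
Your argument is correct and is precisely the intended one: the paper states this as an immediate corollary of Theorem~\ref{pr:inter1} without a separate proof, and your derivation---writing \(K=\int_G k(g)\Lambda(g)\,dg\), \(A_k=\int_G k(g)\uir{}{}(g)\,dg\), and passing \(\oper{W}\) through the integral via~\eqref{eq:cov-trans-intertwine}---is exactly how that corollary unpacks. Your remark on the need for linearity and continuity of \(F\) to justify the interchange is a welcome clarification that the paper leaves implicit.
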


If the invariant pairing is defined by integration with respect to the Haar
measure, cf. Example~\ref{ex:Haar-pairing}, then we can show an
intertwining property for the contravariant transform as well. 
\begin{prop}\cite{Kisil98a}*{Prop.~2.9}
  \label{pr:intertw-inverse-tr}
  Inverse wavelet transform \(
  \oper{M}_{w_0}\)~\eqref{eq:wavelet-reconstruction} intertwines 
  left regular representation \( \Lambda \)~\eqref{eq:left-reg-repr}
  on \( \FSpace{L}{2}(G)\) and \( \uir{}{} \):
  \begin{equation}
    \label{eq:inv-transform-intertwine}
    \oper{M}_{w_0} \Lambda(g) = \uir{}{}(g) \oper{M}_{w_0}.
  \end{equation}
\end{prop}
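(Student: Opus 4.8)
The plan is to establish \eqref{eq:inv-transform-intertwine} by a direct change of variables in the defining integral \eqref{eq:wavelet-reconstruction}, exactly dual to the proof of Theorem~\ref{pr:inter1} for the covariant transform. Fix $g\in G$ and $f\in\FSpace{L}{2}(G)$. Unfolding the definition of $\Lambda$ from \eqref{eq:left-reg-repr} and of $\oper{M}_{w_0}$ from \eqref{eq:wavelet-reconstruction}, I would write
\[
  \oper{M}_{w_0}(\Lambda(g) f)=\int_G [\Lambda(g)f](h)\,\uir{}{}(h)w_0\,dh
  =\int_G f(g^{-1}h)\,\uir{}{}(h)w_0\,dh .
\]
Now substitute $h=gh'$, i.e.\ $h'=g^{-1}h$; by left invariance of the Haar measure $dh=dh'$, so the integral equals $\int_G f(h')\,\uir{}{}(gh')w_0\,dh'$. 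Using the homomorphism property $\uir{}{}(gh')=\uir{}{}(g)\uir{}{}(h')$ and pulling the bounded operator $\uir{}{}(g)$ out of the integral gives $\uir{}{}(g)\int_G f(h')\,\uir{}{}(h')w_0\,dh'=\uir{}{}(g)\,\oper{M}_{w_0}f$, which is the claim.

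An equivalent, slightly more structural packaging uses the relative convolution: since $\oper{M}_{w_0}f=A_f w_0$ with $A_f=\int_G f(g)\,\uir{}{}(g)\,dg$, the computation above is precisely the identity $A_{\Lambda(g)f}=\uir{}{}(g)A_f$, after which one applies both sides to $w_0$. I would present whichever form reads more cleanly alongside the surrounding discussion of relative convolutions.

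The only step that is not purely formal --- and hence the main obstacle --- is the interchange of $\uir{}{}(g)$ with the integral sign, since \eqref{eq:wavelet-reconstruction} is in general a vector-valued integral that converges only in a weak sense and only under the square-integrability and admissibility hypotheses recalled around \eqref{eq:square-integrable}. I would deal with this by testing both sides against an arbitrary $u\in V$: unitarity of $\uir{}{}$ on $\FSpace{L}{2}(G)$ gives $\scalar{\uir{}{}(g)x}{u}=\scalar{x}{\uir{}{}(g^{-1})u}$, which converts the identity into an equality of genuinely scalar-valued integrals, where linearity and the change of variables are unproblematic; one then removes the test vector $u$ at the end. Finally, since $\oper{M}_{w_0}$, $\Lambda(g)$ and $\uir{}{}(g)$ are all bounded, it suffices to verify the identity on the dense set of $f$ for which the integral converges absolutely and extend by continuity --- this is the framework in which \cite{Kisil98a}*{Prop.~2.9} is stated.
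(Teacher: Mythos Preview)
Your argument is correct and is the standard one: the change of variables $h\mapsto gh'$ together with left invariance of the Haar measure and the homomorphism property of $\uir{}{}$ yields the identity, and your weak-sense justification for passing $\uir{}{}(g)$ through the vector-valued integral is the appropriate care to take. Note, however, that the present paper does not actually supply a proof of this proposition in the text---it is quoted from \cite{Kisil98a}*{Prop.~2.9}, just as Theorem~\ref{pr:inter1} is quoted without proof---so there is nothing in the paper to compare against; your proof is precisely the expected dual of the (equally standard, equally unwritten here) argument for Theorem~\ref{pr:inter1}.
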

\begin{cor}\label{co:lambda}
  The image \(\oper{M}_{w_0}(\FSpace{L}{}(G))\subset V\) of a left
  invariant space \(\FSpace{L}{}(G)\) under the inverse wavelet
  transform \(\oper{M}_{w_0}\) is invariant under the representation
  \(\uir{}{}\).
\end{cor}
\begin{rem}
  \label{re:intertw-inverse-ext}
  It is an important observation, that the above intertwining property
  is also true for some contravariant transforms which are not based
  on pairing~\eqref{eq:haar-pairing}. For example, in the case of the
  affine group all pairings~\eqref{eq:hardy-pairing},
  \eqref{eq:hardy-infinity} and
  (non-linear!)~\eqref{eq:sup-functional-Aff} satisfy
  to~\eqref{eq:inv-transform-intertwine} for the respective representation
  \(\uir{}{p}\)~\eqref{eq:ax+b-repr-quasi-reg}. 
\end{rem}
There is also a simple connection between a covariant transform and
right shifts.
\begin{prop}\cites{Kisil10c,Kisil11c}
  \label{pr:inducer-wave-intertw}
  Let \(G\) be a Lie group and \(\uir{}{}\) be a representation of
  \(G\) in a space \(V\). Let \([\oper{W}f](g)=F(\uir{}{}(g^{-1})f)\) be a
  covariant transform defined by a fiducial operator \(F: V \rightarrow U\).
  Then the right shift \([\oper{W}f](gg')\) by \(g'\) is the covariant transform
  \([\oper{W'}f](g)=F'(\uir{}{}(g^{-1})f)]\) defined by the fiducial operator
  \(F'=F\circ\uir{}{}(g^{-1})\). 

  In other words the covariant transform intertwines%
  \index{intertwining operator}%
  \index{operator!intertwining} right shifts \(R(g): f(h) \mapsto
  f(hg)\) on the group \(G\) with the associated action 
  \begin{equation}
    \label{eq:fiducial-action}
    \uir{}{B}(g): F\mapsto F\circ\uir{}{}(g^{-1})
  \end{equation}
  on fiducial operators:
  \begin{equation}
    \label{eq:cov-trans-intertw-right}
    R(g) \circ \oper{W}_{F}=\oper{W}_{\uir{}{B}(g)F}, \qquad g\in G.
  \end{equation}
\end{prop}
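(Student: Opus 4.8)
The plan is to prove both assertions by a one-line manipulation of the defining formula~\eqref{eq:coheret-transf-gen}, using only that $\uir{}{}$ is a representation, so $\uir{}{}(g_1)\uir{}{}(g_2)=\uir{}{}(g_1g_2)$, and that inversion reverses products in $G$. First I would expand the right shift of the covariant transform: for $f\in V$ and $g,g'\in G$,
\begin{equation*}
  [\oper{W}_F f](gg')=F(\uir{}{}((gg')^{-1})f)=F(\uir{}{}(g'^{-1}g^{-1})f)=(F\circ\uir{}{}(g'^{-1}))(\uir{}{}(g^{-1})f).
\end{equation*}
Comparing the right-hand side with Definition~\ref{de:covariant-transform}, this is precisely the value at $g$ of the covariant transform whose fiducial operator is $F'=F\circ\uir{}{}(g'^{-1})\colon V\to U$ (the fixed group element being $g'$); since $\uir{}{}(g'^{-1})$ maps $V$ into $V$, this $F'$ is again a legitimate fiducial operator, and no linearity of $F$ is needed. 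This settles the first assertion.

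For the intertwining identity~\eqref{eq:cov-trans-intertw-right}, I would run the same computation after applying $R(g)$ and evaluating at an arbitrary $h\in G$:
\begin{equation*}
  [R(g)\oper{W}_F f](h)=[\oper{W}_F f](hg)=F(\uir{}{}(g^{-1})\uir{}{}(h^{-1})f)=[\uir{}{B}(g)F](\uir{}{}(h^{-1})f)=[\oper{W}_{\uir{}{B}(g)F}f](h),
\end{equation*}
which is exactly $R(g)\circ\oper{W}_F=\oper{W}_{\uir{}{B}(g)F}$. To make the phrase ``$\oper{W}$ intertwines $R$ and $\uir{}{B}$'' literally correct I would also record that $R$ is a genuine (right regular) action, $R(g_1)R(g_2)=R(g_1g_2)$, and that~\eqref{eq:fiducial-action} defines an action on fiducial operators, namely $\uir{}{B}(g_1)\uir{}{B}(g_2)F=F\circ\uir{}{}(g_2^{-1})\circ\uir{}{}(g_1^{-1})=F\circ\uir{}{}((g_1g_2)^{-1})=\uir{}{B}(g_1g_2)F$.

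There is essentially no analytic obstacle: the whole content is the bookkeeping of the anti-homomorphism $g\mapsto g^{-1}$ composed with the homomorphism $\uir{}{}$. The single point that requires care is the order of composition: a right shift by $g'$ post-composes $F$ with $\uir{}{}(g'^{-1})$ (the operator appears on the \emph{outside}), in contrast with the left-shift intertwining of Theorem~\ref{pr:inter1}, where $\uir{}{}(g)$ is absorbed on the inside. The Lie group hypothesis plays no role in the identity itself and is stated only because the right shifts $R(g)$ and the associated action~\eqref{eq:fiducial-action} on fiducial operators are exploited in a smooth setting elsewhere.
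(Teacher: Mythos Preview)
Your proof is correct and is precisely the direct computation the paper has in mind; indeed the paper does not give a written proof at all, remarking immediately after the statement that ``the above result is obvious'' and passing to its infinitesimal version. Your observation that the Lie group hypothesis is irrelevant to the identity itself, and your verification that both \(R\) and \(\uir{}{B}\) are genuine actions, are accurate and useful additions.
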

Although the above result is obvious, its infinitesimal version has
interesting consequences.  Let \(G\) be a Lie group with a Lie algebra
\(\algebra{g}\) and \(\uir{}{}\) be a smooth representation of \(G\).
We denote by \(d\uir{}{B}\) the derived representation of the
associated representation \(\uir{}{B}\)~\eqref{eq:fiducial-action} on
fiducial operators.
\begin{cor}\cites{Kisil10c,Kisil11c}
  \label{co:cauchy-riemann}
  Let a fiducial operator \(F\) be a null-solution, i.e. \(A F=0\),
  for the operator \(A=\sum_j a_j d\uir{X_j}{B}\), where
  \(X_j\in\algebra{g}\) and \(a_j\) are constants.  Then the covariant
  transform \([\oper{W}_F f](g)=F(\uir{}{}(g^{-1})f)\) for any \(f\)
  satisfies
  \begin{displaymath}
    D (\oper{W}_F f)= 0, \qquad \text{where} \quad
    D=\sum_j \bar{a}_j\linv{X_j}.
  \end{displaymath}
  Here, \(\linv{X_j}\) are the left invariant fields (Lie derivatives) on
  \(G\) corresponding to \(X_j\).
\end{cor}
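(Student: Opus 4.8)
The plan is to obtain the statement by differentiating, in the group variable, the right-shift intertwining relation already established in Proposition~\ref{pr:inducer-wave-intertw}. Recall from~\eqref{eq:cov-trans-intertw-right} that $R(g)\circ\oper{W}_F=\oper{W}_{\uir{}{B}(g)F}$, where $R(g)\colon\phi(h)\mapsto\phi(hg)$ and $\uir{}{B}(g)F=F\circ\uir{}{}(g^{-1})$ is the action~\eqref{eq:fiducial-action} on fiducial operators. Two elementary facts drive the argument. First, the assignment $F\mapsto\oper{W}_F$ is linear: since the fiducial operator enters~\eqref{eq:coheret-transf-gen} by plain evaluation, one has $\oper{W}_{c_1F_1+c_2F_2}=c_1\oper{W}_{F_1}+c_2\oper{W}_{F_2}$ for scalars $c_j$ (this holds even for nonlinear $F_j$, as linear combinations of maps $V\to U$ are always defined); likewise $\uir{}{B}(g)$, and hence $d\uir{X}{B}$, is linear in $F$. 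Second, $\frac{d}{dt}\big|_{t=0}R(\exp tX)=\linv{X}$: this is the standard fact that the left-invariant vector field $\linv{X}$ generates right translations, $(\linv{X}\phi)(h)=\frac{d}{dt}\big|_{t=0}\phi(h\exp tX)$.

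First I would fix $X\in\algebra{g}$, substitute $g=\exp(tX)$ into~\eqref{eq:cov-trans-intertw-right}, and differentiate the resulting identity of functions on $G$ at $t=0$. On the left the derivative is $\linv{X}(\oper{W}_F f)$ by the second fact above; on the right, using linearity of $F\mapsto\oper{W}_F$ together with the smoothness of $\uir{}{}$ (so that $t\mapsto\uir{}{B}(\exp tX)F$ is differentiable with derivative $d\uir{X}{B}F$), the derivative is $\oper{W}_{d\uir{X}{B}F}f$. This yields the infinitesimal intertwining identity
\begin{equation*}
  \linv{X}(\oper{W}_F f)=\oper{W}_{d\uir{X}{B}F}\,f,\qquad X\in\algebra{g},
\end{equation*}
valid for every $f$. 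The one point here that genuinely uses the smoothness hypothesis is that both sides actually make sense and are differentiable in $t$; I would record that check explicitly (restricting, if necessary, to the smooth vectors on which $t\mapsto F(\uir{}{}(\exp -tX)v)$ is differentiable), after which the equality of derivatives is automatic from the equality of the functions.

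Next I would take the prescribed linear combination: applying the displayed identity to each $X_j$, scaling by the corresponding constant and summing gives $\sum_j c_j\,\linv{X_j}(\oper{W}_F f)=\oper{W}_{\sum_j c_j\, d\uir{X_j}{B}F}\,f$; choosing the $c_j$ so that the operator on the right equals $A=\sum_j a_j\,d\uir{X_j}{B}$ and invoking the hypothesis $AF=0$, the right-hand side collapses to $\oper{W}_0 f=0$, so $\sum_j c_j\linv{X_j}$ annihilates $\oper{W}_F f$. The main obstacle is purely the bookkeeping of these constants — whether the annihilating operator comes out as $\sum_j a_j\linv{X_j}$ or as the stated $\sum_j\bar a_j\linv{X_j}$. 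This hinges entirely on the (anti)linearity convention attached to the fiducial operator: in the motivating case $F=\scalar{\cdot}{v_0}$ of Example~\ref{ex:wavelet} the inner product is conjugate-linear in the mother-wavelet slot, so re-expressing $AF=0$ and the combination $\oper{W}_{\sum_j c_j d\uir{X_j}{B}F}f$ through $v_0$ transfers a complex conjugation onto the coefficients, which is precisely the source of the $\bar a_j$. I would therefore carry out this last step keeping scrupulous track of which maps are linear and which conjugate-linear, matching the conventions fixed for $\uir{}{B}$ and for the Lie derivatives $\linv{X_j}$, and then read off $D=\sum_j\bar a_j\linv{X_j}$, completing the proof.
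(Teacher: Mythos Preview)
Your approach is exactly the one the paper intends: the corollary is presented there without proof, simply as ``the infinitesimal version'' of Proposition~\ref{pr:inducer-wave-intertw}, and your differentiation of~\eqref{eq:cov-trans-intertw-right} along $g=\exp(tX)$ to obtain $\linv{X}(\oper{W}_F f)=\oper{W}_{d\uir{X}{B}F}\,f$, followed by a linear combination, is precisely that.

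Your caution about the coefficients is well placed. The direct computation you outline gives the annihilator $\sum_j a_j\linv{X_j}$, not $\sum_j\bar a_j\linv{X_j}$; the conjugation in the stated $D$ is a convention artefact tied to how the corollary is actually applied in Examples~\ref{ex:cauchy-riemann-wav}--\ref{ex:laplace-wavelet}, where the annihilating operator is written for the \emph{mother wavelet} $v_0$ (via $d\uir{X}{}$) rather than for the fiducial functional $F=\scalar{\cdot}{v_0}$ (via $d\uir{X}{B}$). Since $d\uir{X}{B}F=\scalar{\cdot}{d\uir{X}{}v_0}$ for unitary $\uir{}{}$, the passage $\sum_j a_j d\uir{X_j}{B}F=0 \Longleftrightarrow \sum_j\bar a_j d\uir{X_j}{}v_0=0$ is exactly the conjugate-linearity you identified, and the bar on $a_j$ records this translation. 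So your bookkeeping instinct is correct; just make explicit which of the two formulations (on $F$ or on $v_0$) you are using when you read off $D$.
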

\begin{example}
  \label{ex:cauchy-riemann-wav}
  Consider representation \(\uir{}{}\)~\eqref{eq:ax+b-repr-quasi-reg} of
  the \(ax+b\) group with the \(p=1\). Let \(\mathsf{A}\) and \(\mathsf{N}\) be the
  basis of \(\algebra{g}\) generating one-parameter subgroups \(A\)
  and \(N\)~\eqref{eq:subgroups-A-N}, respectively. Then, the derived representations are:
  \begin{displaymath}
    [d\uir{\mathsf{A}}{} f](x)= -f(x)-xf'(x), \qquad [d\uir{\mathsf{N}}{}f](x)=-f'(x).
  \end{displaymath}
  The corresponding left invariant vector fields on \(ax+b\) group%
  \index{$ax+b$ group}%
  \index{group!$ax+b$} are:
  \begin{displaymath}
   \linv{\mathsf{A}} =a \partial_a,\qquad \linv{\mathsf{N}}=a\partial_b.
  \end{displaymath}
  The mother wavelet \(\frac{1}{x+\rmi}\) in~\eqref{eq:cauchy-pm} is a
  null solution of the operator
  \begin{equation}
    -d\uir{\mathsf{A}}{} -\rmi d\uir{\mathsf{N}}{}=I+(x+\rmi)\frac{d}{dx}.
  \end{equation}
  Therefore, the image of the covariant transform with fiducial
  operator \(F_+\)~\eqref{eq:cauchy-pm} consists of the null solutions
  to the operator \(-\linv{\mathsf{A}}+\rmi\linv{\mathsf{N}}=\rmi
  a(\partial_b+\rmi\partial_a)\), that is in the essence 
  Cauchy--Riemann operator%
  \index{Cauchy--Riemann operator}%
  \index{operator!Cauchy--Riemann}
  \(\partial_{\bar{z}}\)~\eqref{eq:CR-Laplace} in the upper
  half-plane.
\end{example}
\begin{example}
  \label{ex:laplace-wavelet}
  In the above setting, the function
  \(p(x)=\frac{1}{\pi}\frac{1}{x^2+1}\)~\eqref{eq:poisson-kernel} is a
  null solution of the operator:
  \begin{displaymath}
    (d\uir{\mathsf{A}}{})^2 - d\uir{\mathsf{A}}{}
    +(d\uir{\mathsf{N}}{})^2
    =2I+4x\frac{d}{dx}+(1+x^2)\frac{d^2}{dx^2}.
  \end{displaymath}
  The covariant transform with the mother wavelet \(p(x)\) is the
  Poisson integral, its values are null solutions to the operator
  \((\linv{\mathsf{A}})^2-\linv{\mathsf{A}}+(\linv{\mathsf{N}})^2
  =a^2(\partial_b^2+\partial_a^2)\), which is
  Laplace operator \(\Delta\)~\eqref{eq:CR-Laplace}.%
  \index{Laplace!operator}%
  \index{operator!Laplace}
\end{example}
\begin{example}
  \label{ex:dyadic-cubes-wavelet}
  Fiducial functional \(F_m\)~\eqref{eq:F-modulus-def} is a null
  solution of the following functional equation:
  \begin{displaymath}
    \textstyle
    F_m-F_m\circ\uir{}{\infty}(\frac{1}{2},\frac{1}{2})-F_m\circ\uir{}{\infty}(\frac{1}{2},-\frac{1}{2})=0.
  \end{displaymath}
  Consequently, the image of wavelet transform
  \(\oper{W}^m_p\)~\eqref{eq:hardy-max} consists of functions which
  solve the equation:
  \begin{displaymath}
    \textstyle
    (I-R (\frac{1}{2},\frac{1}{2})-R (\frac{1}{2},-\frac{1}{2}))f=0
    \quad\text{ or }\quad
    f(a,b)=f(\frac{1}{2} a, b+\frac{1}{2}a)+f(\frac{1}{2} a, b-\frac{1}{2}a).
  \end{displaymath}
  The last relation is the key to the stopping time
  argument%
  \index{stopping time argument}~\cite{Garnett07a}*{Ch.~VI, Lem.~2.2}
  and the dyadic squares%
  \index{dyadic!squares}%
  \index{squares!dyadic} technique, see for
  example~\cite{Stein93}*{\S~IV.3}, \cite{Garnett07a}*{Ch.~VII, Thm.~1.1} or the picture on the
  front cover of the latter book.
\end{example}
The moral of the above
Examples~\ref{ex:cauchy-riemann-wav}--\ref{ex:dyadic-cubes-wavelet}
is: there is a significant freedom in choice of covariant
transforms. However, some fiducial functionals have special properties,
which suggest the suitable technique (e.g., analytic, harmonic,
dyadic, etc.)  following from this choice.


\section{\large Composing the covariant and the contravariant transforms}
\label{sec:comp-covar-transfr}

From Props.~\ref{pr:inter1}, \ref{pr:intertw-inverse-tr} and
Rem.~\ref{re:intertw-inverse-ext} we deduce the following
\begin{cor}
  The composition \(\oper{M}_w \circ \oper{W}_F\) of a covariant
  \(\oper{M}_w \) and contravariant \(\oper{W}_F\) transforms is a
  map \(V\rightarrow V\), which commutes with \(\uir{}{}\), i.e.,
  intertwines \(\uir{}{}\) with itself.

  In particular for the affine group and 
  representation~\eqref{eq:ax+b-repr-quasi-reg}, \(\oper{M}_w \circ
  \oper{W}_F\) commutes with shifts and dilations of the real line.
\end{cor}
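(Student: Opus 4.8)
The plan is to chain the two intertwining statements already in hand. First I would record the standing hypothesis that makes the composition meaningful: by Definition~\ref{de:covariant-transform} the covariant transform \(\oper{W}_F\) sends \(V\) into a space of \(U\)-valued functions on \(G\), and by Corollary~\ref{co:pi} this image is a left-invariant subspace; one needs it to lie inside the left-invariant space \(L\) on which the invariant pairing defining the contravariant transform \(\oper{M}_w\) (Definition~\ref{de:admissible}) lives --- and, when \(w\) ranges over a family as in the extended contravariant transform~\eqref{eq:extend-inverse-covar-trans}, that \(\sup_g\norm[1]{w(g)}<\infty\). Granting this, \(\oper{M}_w\circ\oper{W}_F\) is a well-defined map \(V\rightarrow V\).

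The core of the argument is a two-line diagram chase. By Theorem~\ref{pr:inter1}, \(\oper{W}_F\,\uir{}{}(g)=\Lambda(g)\,\oper{W}_F\) on \(V\); by Proposition~\ref{pr:intertw-inverse-tr}, supplemented by Remark~\ref{re:intertw-inverse-ext} which extends the identity to the Hardy-type and \(\FSpace{L}{\infty}\)-type pairings of the examples, \(\oper{M}_w\,\Lambda(g)=\uir{}{}(g)\,\oper{M}_w\) on \(L\). Composing,
\begin{equation*}
  (\oper{M}_w\circ\oper{W}_F)\,\uir{}{}(g)=\oper{M}_w\,\Lambda(g)\,\oper{W}_F=\uir{}{}(g)\,(\oper{M}_w\circ\oper{W}_F),
\end{equation*}
which is precisely the asserted intertwining of \(\uir{}{}\) with itself. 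For the second claim I would specialise to \(G=\mathrm{Aff}\) and \(\uir{}{}=\uir{}{p}\) from~\eqref{eq:ax+b-repr-quasi-reg}: unwinding the group law~\eqref{eq:ax+b-group-law}, the subgroup \(N\) acts by translations and the subgroup \(A\) by dilations of the real line, so commuting with every \(\uir{}{p}(a,b)\) is the same as commuting with all shifts and all dilations.

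The algebra above is routine; the part that needs care is the analytic bookkeeping hidden in the standing hypothesis. In each concrete instance --- the Cauchy and Poisson fiducial operators of Example~\ref{ex:cauchy-integral}, the maximal-function pairings of Examples~\ref{ex:maximal-function-inv-wavelet} and~\ref{ex:limit-function-inv-wavelet} --- one must check that the chosen pairing is genuinely left invariant on the pair of spaces at hand and that \(\oper{W}_F(V)\) actually sits inside the first of them, so that Remark~\ref{re:intertw-inverse-ext} applies; and for the non-linear pairing~\eqref{eq:sup-functional-Aff} one should note that only invariance~\eqref{eq:invariance-pairing} of the pairing enters the proof of Proposition~\ref{pr:intertw-inverse-tr}, so the conclusion survives the loss of linearity. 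This compatibility of domains, rather than the intertwining identity itself, is the real obstacle.
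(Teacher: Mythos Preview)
Your proposal is correct and follows exactly the route the paper takes: the corollary is stated immediately after the line ``From Props.~\ref{pr:inter1}, \ref{pr:intertw-inverse-tr} and Rem.~\ref{re:intertw-inverse-ext} we deduce the following'', with no further argument supplied. Your two-line chain \((\oper{M}_w\circ\oper{W}_F)\,\uir{}{}(g)=\oper{M}_w\,\Lambda(g)\,\oper{W}_F=\uir{}{}(g)\,(\oper{M}_w\circ\oper{W}_F)\) is precisely the intended deduction, and your added discussion of the domain compatibility and the non-linear case is more careful than what the paper itself records.
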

Since the image space of \(\oper{M}_w \circ \oper{W}_F\) is an
\(\mathrm{Aff}\)-invariant space, we shall be interested in the smallest
building blocks with the same property. For the Hilbert spaces, any
group invariant subspace \(V\) can be decomposed into a direct
integral \(V=\oplus\int V_\mu\,d\mu\) of \emph{irreducible}%
\index{representation!irreducible}%
\index{irreducible!representation} subspaces \(V_\mu\), i.e. \(V_\mu\)
does not have any non-trivial invariant
subspace~\cite{Kirillov76}*{\S~8.4}.  For representations in Banach
spaces complete reducibility may not occur and we shall look for
\emph{primary} subspace, i.e. space which is not a direct sum of two
invariant subspaces~\cite{Kirillov76}*{\S~8.3}. We already identified
such subspaces as generalized Hardy spaces%
\index{space!Hardy!generalized}%
\index{Hardy!space!generalized} in Defn.~\ref{de:hardy-space}. They
are also related to covariant functional
calculus%
\index{covariant!functional calculus}%
\index{functional!calculus!covariant}%
\index{calculus!functional!covariant}~\citelist{\cite{Kisil02a} \cite{Kisil11c}*{\S~6}}.

For irreducible Hardy spaces, we can use the following general
principle, which has several different formulations,
cf.~\cite{Kirillov76}*{Thm.~8.2.1}:
\begin{lem}[Schur] 
  \label{le:Schur}
  \cite{AliAntGaz00}*{Lem.~4.3.1}
  Let \(\uir{}{}\) be a continuous unitary irreducible representation
  of \(G\) on the Hilbert space \(H\). If a bounded operator \(T: H
  \rightarrow T\) commutes
  with \(\uir{}{}(g)\), for all \(g \in G\), 
  then \(T = k I\), for some \(\lambda \in \Space{C}{}\).
\end{lem}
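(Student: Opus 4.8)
The plan is to follow the classical route for Schur's lemma in the unitary setting: reduce the statement to self-adjoint operators and then exploit the spectral theorem together with the irreducibility hypothesis.

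First I would record the elementary but crucial consequence of unitarity. Since $\uir{}{}$ is unitary, $\uir{}{}(g)^{*}=\uir{}{}(g^{-1})$, so taking adjoints in $T\uir{}{}(g)=\uir{}{}(g)T$ shows that $T^{*}$ also commutes with every $\uir{}{}(g)$. Hence the bounded self-adjoint operators $A=\frac12(T+T^{*})$ and $B=\frac{1}{2\rmi}(T-T^{*})$ each commute with the whole representation, and $T=A+\rmi B$. Therefore it suffices to prove the claim for a bounded self-adjoint operator commuting with $\uir{}{}(g)$ for all $g\in G$.

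Next, given such a self-adjoint $A$, I would invoke the spectral theorem: $A=\int_{\sigma(A)}\lambda\,dE_\lambda$ for a projection-valued measure $E$. Each spectral projection $E_\Omega$, with $\Omega\subset\Space{R}{}$ Borel, lies in the von Neumann algebra generated by $A$ (equivalently, in its bicommutant), hence commutes with every bounded operator that commutes with $A$ — in particular with every $\uir{}{}(g)$. Thus $\mathrm{ran}\,E_\Omega$ is a closed $\uir{}{}$-invariant subspace of $H$, and irreducibility forces $E_\Omega=0$ or $E_\Omega=I$. From this I would conclude that the spectral measure is a single point mass: the monotone family $t\mapsto E_{(-\infty,t]}$ takes only the values $0$ and $I$, so it jumps at a unique real number $k$, whence $E_{\{k\}}=I$, i.e.\ $A=kI$. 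Combining with the reduction above, $T=(k_1+\rmi k_2)I$ is a scalar multiple of the identity.

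The main obstacle is exactly the one that the reduction to self-adjoint operators is designed to circumvent: in infinite dimensions $T$ need not possess any eigenvector, so one cannot simply take $\ker(T-kI)$ for an eigenvalue $k$ and declare it a non-trivial invariant subspace. The substantive input is therefore the spectral theorem for bounded self-adjoint operators together with the standard fact that the spectral projections of $A$ commute with the commutant of $A$; everything else is a short argument using only the definition of irreducibility and the unitarity of $\uir{}{}$.
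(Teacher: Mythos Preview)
Your argument is correct and is the standard route to Schur's lemma in the Hilbert-space setting: reduce to self-adjoint operators via unitarity, then use the spectral theorem and the fact that spectral projections lie in the bicommutant of \(A\) to exhibit \(\uir{}{}\)-invariant subspaces, which irreducibility forces to be trivial.

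There is nothing to compare against here: the paper does not supply its own proof of Lemma~\ref{le:Schur}, it merely quotes the statement from \cite{AliAntGaz00}*{Lem.~4.3.1}. The only commentary the paper offers is the subsequent remark that proofs of Schur's lemma hinge on the existence of joint invariant subspaces for all \(\uir{}{}(g)\), which is exactly the mechanism your spectral-projection argument exploits. So your write-up is both correct and in the spirit of what the paper gestures at.
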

\begin{rem}
  A revision of proofs of the Schur's Lemma, even in different
  formulations, show that the result is related to the existence of
  joint invariant subspaces%
  \index{invariant!subspace}%
  \index{subspace!invariant} for all operators \(\uir{}{}(g)\), \(g\in G\).
\end{rem}

In the case of  classical wavelets,
the relation between wavelet transform~\eqref{eq:wavelet-transf}
and inverse wavelet transform~\eqref{eq:wavelet-reconstruction} is
suggested by their names. 
\begin{example}
  \label{ex:duflo-moor}
  For an irreducible square integrable representation and admissible
  vectors \(v_0\) and \(w_0\), there is the
  relation~\cite{AliAntGaz00}*{(8.52)}:
  \begin{equation}
    \label{eq:wave-trans-inverse-ident}
    \oper{M}_{w_0}\oper{W}_{v_0}=k I,
  \end{equation}
  as an immediate consequence from the Schur's lemma. Furthermore, 
  square integrability condition~\eqref{eq:square-integrable} ensures
  that \(k\neq 0\). The exact value of the constant \(k\) depends on
  \(v_0\), \(w_0\) and the Duflo--Moore
  operator~\citelist{\cite{DufloMoore} \cite{AliAntGaz00}*{\S~8.2}}.
\end{example}

It is of interest here, that two different vectors can be used as
analysing vector in~\eqref{eq:wavelet-transf} and for the
reconstructing formula~\eqref{eq:wavelet-reconstruction}. Even a
greater variety can be achieved if we use additional fiducial
operators and invariant pairings.

For the affine group, recall the decomposition from
Prop.~\ref{pr:fourier-decompos} into invariant subspaces
\(\FSpace{L}{2}(\Space{R}{})=\FSpace{H}{2}\oplus
\FSpace[\perp]{H}{2}\) and the fact, that the restrictions
\(\uir{+}{2}\) and \(\uir{-}{2}\) of
\(\uir{}{2}\)~\eqref{eq:ax+b-repr-quasi-reg} on \(\FSpace{H}{2}\) and
\(\FSpace[\perp]{H}{2}\) are not unitary equivalent. Then, Schur's
lemma implies:
\begin{cor}
  \label{co:intertwining-L2-H2}
  Any bounded linear operator \(T:
  \FSpace{L}{2}(\Space{R}{})\rightarrow \FSpace{L}{2}(\Space{R}{})\)
  commuting with \(\uir{}{2}\) has the form \(k_1
  I_{\FSpace{H}{2}} \oplus k_2 I_{\FSpace[\perp]{H}{2}}\) for some
  constants \(k_1\), \(k_2\in\Space{C}{}\). Consequently, the Fourier
  transform maps \(T\) to the operator of multiplication by
  \(k_1\chi_{(0,+\infty)}+k_2\chi_{(-\infty,0)}\).
\end{cor}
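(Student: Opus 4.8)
The plan is to reduce the statement to Schur's Lemma~\ref{le:Schur} by exploiting the orthogonal decomposition \(\FSpace{L}{2}(\Space{R}{})=\FSpace{H}{2}\oplus\FSpace[\perp]{H}{2}\) from Prop.~\ref{pr:fourier-decompos}. Write \(P\) for the orthogonal projection onto \(\FSpace{H}{2}\). Since \(\uir{}{2}\) is unitary and \(\FSpace{H}{2}\) is \(\uir{}{2}\)-invariant, the complement \(\FSpace[\perp]{H}{2}\) is invariant as well, so \(P\) (and \(I-P\)) commutes with every \(\uir{}{2}(g)\). Hence, for a bounded \(T\) commuting with \(\uir{}{2}\), each of the four ``corner'' operators \(PTP\), \((I-P)T(I-P)\), \(PT(I-P)\), \((I-P)TP\) also commutes with \(\uir{}{2}\).

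First I would kill the off-diagonal corners. Viewed as a map \(\FSpace[\perp]{H}{2}\to\FSpace{H}{2}\), the operator \(PT(I-P)\) intertwines \(\uir{-}{2}\) with \(\uir{+}{2}\): for \(v\in\FSpace[\perp]{H}{2}\) we have \((I-P)v=v\) and \((I-P)\uir{}{2}(g)=\uir{}{2}(g)(I-P)\), so \(PT(I-P)\uir{-}{2}(g)v=\uir{+}{2}(g)\,PT(I-P)v\). By Prop.~\ref{pr:fourier-decompos} and the discussion preceding it, \(\uir{+}{2}\) and \(\uir{-}{2}\) are irreducible and \emph{not} unitarily equivalent; the companion form of Schur's Lemma (no nonzero intertwiner between inequivalent irreducible unitary representations) then gives \(PT(I-P)=0\), and symmetrically \((I-P)TP=0\). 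Thus \(T=PTP\oplus(I-P)T(I-P)\) is block-diagonal for \(\FSpace{L}{2}(\Space{R}{})=\FSpace{H}{2}\oplus\FSpace[\perp]{H}{2}\).

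Next, each diagonal block is a bounded operator on an irreducible component commuting with the corresponding irreducible unitary representation, so Lemma~\ref{le:Schur} yields \(PTP|_{\FSpace{H}{2}}=k_1 I_{\FSpace{H}{2}}\) and \((I-P)T(I-P)|_{\FSpace[\perp]{H}{2}}=k_2 I_{\FSpace[\perp]{H}{2}}\) for some \(k_1,k_2\in\Space{C}{}\); this is the claimed form \(T=k_1 I_{\FSpace{H}{2}}\oplus k_2 I_{\FSpace[\perp]{H}{2}}\). The Fourier picture is then obtained by transport: by Prop.~\ref{pr:fourier-decompos}, \(\oper{F}\) maps \(\FSpace{H}{2}\) unitarily onto \(\FSpace{L}{2}(0,\infty)\) and \(\FSpace[\perp]{H}{2}\) onto \(\FSpace{L}{2}(-\infty,0)\), so \(\oper{F}T\oper{F}^{-1}\) acts as multiplication by \(k_1\) on \(\FSpace{L}{2}(0,\infty)\) and by \(k_2\) on \(\FSpace{L}{2}(-\infty,0)\), i.e.\ by \(k_1\chi_{(0,+\infty)}+k_2\chi_{(-\infty,0)}\).

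The one genuinely delicate point is the vanishing \(PT(I-P)=0\): it does not use the ``commutant is scalar'' statement quoted as Lemma~\ref{le:Schur} but its standard companion about inequivalent irreducibles (which, as the remark after Lemma~\ref{le:Schur} indicates, rests on the absence of joint invariant subspaces). I would flag this explicitly, since the inequivalence of \(\uir{+}{2}\) and \(\uir{-}{2}\) is exactly what permits the two constants \(k_1,k_2\) to be chosen independently; everything else is a routine application of Schur's Lemma and the already-established decomposition.
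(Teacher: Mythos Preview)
Your argument is correct and is exactly the approach the paper takes: the sentence immediately preceding the corollary recalls the decomposition \(\FSpace{L}{2}(\Space{R}{})=\FSpace{H}{2}\oplus\FSpace[\perp]{H}{2}\) and the inequivalence of \(\uir{+}{2}\) and \(\uir{-}{2}\), and then simply says ``Schur's lemma implies'' the corollary. You have merely spelled out the details (block decomposition, vanishing of the off-diagonal corners via the companion form of Schur's lemma, scalarity of the diagonal blocks), which the paper leaves implicit.
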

Of course, Corollary~\ref{co:intertwining-L2-H2} is applicable to the
composition of covariant and contravariant transforms. In
particular, the constants \(k_1\) and \(k_2\) may have zero values:
for example, the zero value occurs for
\(\oper{W}\)~\eqref{eq:wavelet-transf} with an admissible vector
\(v_0\) and non-tangential limit
\(\oper{M}_{v^*_0}^H\)~\eqref{eq:nt-limit}---because a square
integrable function \(f(a,b)\) on \(\mathrm{Aff}\) vanishes for
\(a\rightarrow 0\).

\begin{example}
  \label{ex:hardy-littlewood}
  The composition of contravariant transform
  \(\oper{M}_{v^*_0}\)~\eqref{eq:nt-maximal-function} with 
  covariant transform \(\oper{W}_\infty\)~\eqref{eq:hardy-max} is:
  \begin{eqnarray}
    \label{eq:hardy-littlewood-comp}
    [\oper{M}_{v^*_0} \oper{W}_\infty f](t)&=& 
    \sup_{a>\modulus{b-t}}\left\{ \frac{1}{2a} \int\limits^{b+a}_{b-a}
    \modulus{f\left(x\right)}\,dx\right\}\\
  &=&
        \sup_{b_1<t<b_2} \left\{\frac{1}{b_2-b_1} \int\limits^{b_2}_{b_1}
    \modulus{f\left(x\right)}\,dx\right\}.\nonumber 
  \end{eqnarray}
  Thus, \(\oper{M}_{v^*_0} \oper{W}_\infty f\) coincides with 
  \emph{Hardy--Littlewood maximal function}%
  \index{Hardy--Littlewood!maximal functions}%
  \index{maximal functions!Hardy--Littlewood}
  \(f^M\)~\eqref{eq:hardy-littlewood}, which contains important
  information on the original function \(f\)
  \cite{Koosis98a}*{\S~VIII.B.1}.  Combining Props.~\ref{pr:inter1}
  and~\ref{pr:intertw-inverse-tr} (through
  Rem.~\ref{re:intertw-inverse-ext}), we deduce that the operator \(M:
  f\mapsto f^M\) commutes%
  \index{intertwining operator}%
  \index{operator!intertwining} with \(\uir{}{p}\): \(\uir{}{p}M=M
  \uir{}{p}\).  Yet, \(M\) is non-linear and
  Cor.~\ref{co:intertwining-L2-H2} is not applicable in this case.
\end{example}

\begin{example}
  Let the mother wavelet \(v_0(x)=\delta(x)\) be the Dirac delta
  function, then the wavelet transform \(\oper{W}_\delta\) generated
  by \(\uir{}{\infty}\)~\eqref{eq:ax+b-repr-quasi-reg} on
  \(\FSpace{C}{}(\Space{R}{})\) is \([\oper{W}_\delta f](a,b)=f(b)\).
  Take the reconstruction vector \(w_0(t)=(1-\chi_{[-1,1]}(t))/t/\pi\) and
  consider the respective inverse wavelet transform \(\oper{M}_{w_0}\)
  produced by Hardy pairing~\eqref{eq:hardy-pairing}. Then, the
  composition of both maps is:
  \begin{eqnarray}
    [\oper{M}_{w_0}\circ \oper{W}_\delta f](t)&=&
        \lim_{a\rightarrow 0}\, \frac{1}{\pi}\!\int\limits_{-\infty}^{\infty}
    f(b)\,\uir{}{\infty}(a,b)w_0(t)\,\frac{db}{a} \nonumber \\
    &=& \lim_{a\rightarrow 0}\,  \frac{1}{\pi}\!\int\limits_{-\infty}^{\infty}
    f(b)\, \frac{1-\chi_{[-a,a]}(t-b)}{t-b} \,{db}\nonumber \\
    &=&\lim_{a\rightarrow 0}\,  \frac{1}{\pi}\!\int\limits_{\modulus{b}>a}
    \frac{f(b)}{t-b} \,{db}.\label{eq:hilbert-transform}
  \end{eqnarray}
  The last expression is the \emph{Hilbert transform}%
  \index{Hilbert!transform}%
  \index{transform!Hilbert} \(\oper{H}=\oper{M}_{w_0}\circ
  \oper{W}_\delta\), which is an example of a \emph{singular integral
    operator}%
  \index{singular!integral operator}%
  \index{operator!integral!singular} (SIO%
  \index{SIO@{see}singular integral
    operator})~\citelist{\cite{Stein93}*{\S~I.5}
    \cite{MityushevRogosin00a}*{\S~2.6}} defined through the principal
  value%
  \index{principal!value}%
  \index{value!principal}~\eqref{eq:pv-cauchy} (in the sense of Cauchy).  By
  Cor.~\ref{co:intertwining-L2-H2} we know that \(\oper{H}=k_1
  I_{\FSpace{H}{2}} \oplus k_2 I_{\FSpace[\perp]{H}{2}}\) for some
  constants \(k_1\), \(k_2\in\Space{C}{}\). Furthermore, we can
  directly check that \(\oper{H} J= -J\oper{H} \), for the reflection \(J\)
  from~\eqref{eq:J-map}, thus \(k_1=-k_2\). An evaluation of
  \(\oper{H}\) on a simple function from \(\FSpace{H}{2}\) (say, the
  Cauchy kernel \(\frac{1}{x+\rmi}\)) gives the value of the constant
  \(k_1=-\rmi\). Thus, \(\oper{H}=(-\rmi I_{\FSpace{H}{2}}) \oplus
  (\rmi I_{\FSpace[\perp]{H}{2}})\).
\end{example}
In fact, the previous reasons imply the following 
\begin{prop} 
  \label{pr:Hilbert-transf-charact}
  \cite{Stein70a}*{\S~III.1.1} Any bounded linear operator on
  \(\FSpace{L}{2}(\Space{R}{})\) commuting with quasi-regular
  representation \(\uir{}{2}\)~\eqref{eq:ax+b-repr-quasi-reg} and
  anticommuting with reflection \(J\)~\eqref{eq:J-map} is a
  constant multiple of Hilbert transform~\eqref{eq:hilbert-transform}.
\end{prop}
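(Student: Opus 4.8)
The plan is to derive the statement from Corollary~\ref{co:intertwining-L2-H2} together with the bookkeeping of how the reflection $J$~\eqref{eq:J-map} interacts with the decomposition $\FSpace{L}{2}(\Space{R}{})=\FSpace{H}{2}\oplus\FSpace[\perp]{H}{2}$. The argument is therefore short; the substantial content was already deposited in Schur's Lemma~\ref{le:Schur} and Prop.~\ref{pr:fourier-decompos}.

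First I would invoke Corollary~\ref{co:intertwining-L2-H2}: since the restrictions $\uir{+}{2}$ and $\uir{-}{2}$ of $\uir{}{2}$ to $\FSpace{H}{2}$ and $\FSpace[\perp]{H}{2}$ are irreducible and mutually inequivalent, a bounded linear operator $T$ commuting with all $\uir{}{2}(g)$, $g\in\mathrm{Aff}$, has the block form $T=k_1 I_{\FSpace{H}{2}}\oplus k_2 I_{\FSpace[\perp]{H}{2}}$ for some $k_1,k_2\in\Space{C}{}$. Next I would feed in the anticommutation hypothesis. By the discussion following Prop.~\ref{pr:fourier-decompos}, $J$ anticommutes with the Fourier transform and hence maps $\FSpace{H}{2}$ onto $\FSpace[\perp]{H}{2}$ and back; so for $f\in\FSpace{H}{2}$ one has $TJf=k_2 Jf$ while $JTf=k_1 Jf$, and the hypothesis $TJ=-JT$ forces $k_2=-k_1$. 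Thus $T=k_1\bigl(I_{\FSpace{H}{2}}\oplus(-I_{\FSpace[\perp]{H}{2}})\bigr)$, a one-parameter family. Finally I would match this family against $\oper{H}$: the Example immediately preceding the statement gives $\oper{H}=(-\rmi I_{\FSpace{H}{2}})\oplus(\rmi I_{\FSpace[\perp]{H}{2}})$, equivalently $\rmi\,\oper{H}=I_{\FSpace{H}{2}}\oplus(-I_{\FSpace[\perp]{H}{2}})$, whence $T=\rmi k_1\,\oper{H}$. Since $\oper{H}$ itself commutes with $\uir{}{2}$ and anticommutes with $J$, the operators satisfying the hypotheses are exactly the constant multiples of the Hilbert transform~\eqref{eq:hilbert-transform}.

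I do not anticipate a genuine obstacle: the heavy lifting has been done upstream. The one point deserving care is the claim that $J$ \emph{interchanges} $\FSpace{H}{2}$ and $\FSpace[\perp]{H}{2}$ rather than preserving each of them separately --- this is precisely what converts ``anticommutes with $J$'' into the scalar relation $k_2=-k_1$, and it rests on $\widehat{Jf}(\lambda)=\hat f(-\lambda)$ together with $\oper{F}(\FSpace{H}{2})=\FSpace{L}{2}(0,\infty)$ from Prop.~\ref{pr:fourier-decompos}. A second point worth flagging is that the linearity of $T$ is essential, as it is what licenses Schur's Lemma~\ref{le:Schur} and hence Corollary~\ref{co:intertwining-L2-H2}; the intertwining but non-linear maximal operator of Example~\ref{ex:hardy-littlewood} shows that the conclusion fails without it.
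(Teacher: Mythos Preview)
Your proposal is correct and follows exactly the route the paper indicates: the statement is introduced by ``In fact, the previous reasons imply the following'', and those reasons are precisely Cor.~\ref{co:intertwining-L2-H2} together with the observation that \(J\) swaps \(\FSpace{H}{2}\) and \(\FSpace[\perp]{H}{2}\), forcing \(k_1=-k_2\), after which the explicit form \(\oper{H}=(-\rmi I_{\FSpace{H}{2}})\oplus(\rmi I_{\FSpace[\perp]{H}{2}})\) from the preceding Example finishes the identification. Your write-up is in fact slightly more explicit than the paper's, spelling out why \(J\) interchanges the two invariant summands and flagging the necessity of linearity for Schur's Lemma.
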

\begin{example}
  \label{ex:hilbert-trans-analytic}
  Consider the covariant transform \(\oper{W}_q\) defined by the inadmissible
  wavelet \(q(t)\)~\eqref{eq:conj-poisson-kernel}, the conjugated
  Poisson kernel%
  \index{Poisson kernel!conjugated}%
  \index{kernel!Poisson!conjugated}.  Its composition with the 
  contravariant transform \(\oper{M}_{{v}^+_0}^H\)~\eqref{eq:norm-limit}
  is
  \begin{equation}
    \label{eq:hilbert-trans-analytic}
    [\oper{M}_{{v}^+_0}^H\circ \oper{W}_q f](t)=
    \varlimsup_{a\rightarrow 0}
    \frac{1}{\pi}\int_{\Space {R}{}} \frac{f(x)\,(t-x)}{(t-x)^2+a^2}\,dx
  \end{equation}
  We can see that this composition satisfies to
  Prop.~\ref{pr:Hilbert-transf-charact}, the constant factor can again  be
  evaluated from the Cauchy kernel \(f(x)=\frac{1}{x+\rmi}\) and
  is equal to \(1\). Of course, this is a classical
  result~\cite{Grafakos08}*{Thm.~4.1.5} in harmonic analysis
  that~\eqref{eq:hilbert-trans-analytic} provides an alternative
  expression for Hilbert transform%
  \index{Hilbert!transform}%
  \index{transform!Hilbert}~\eqref{eq:hilbert-transform}.
\end{example}

\begin{example}
  Let \(\oper{W}\) be a covariant transfrom generated either by the
  functional \(F_\pm\)~\eqref{eq:cauchy-pm} (i.e. the Cauchy integral)
  or \(\frac{1}{2}  (F_+ -F_-)\) (i.e. the Poisson integral) from the
  Example~\ref{ex:cauchy-integral}. Then, for contravariant
  transform \(\oper{M}_{v^+_0}^H\)~\eqref{eq:vert-maximal-function}
  the composition \(\oper{M}_{v^+_0}^H \oper{W}\) becomes the normal
  boundary value of the Cauchy/Poisson integral, respectively.  The
  similar composition \(\oper{M}_{v^*_0}^H \oper{W}\) for
  reconstructing vector \(v^*_0\)~\eqref{eq:v-plus-v-star} turns to be
  the non-tangential limit of the Cauchy/Poisson integrals.
\end{example}

The maximal function and SIO are often treated as elementary
building blocks of harmonic analysis. In particular, it is common to
define the Hardy space%
\index{space!Hardy}%
\index{Hardy!space} as a closed subspace of
\(\FSpace{L}{p}(\Space{R}{})\) which is mapped to
\(\FSpace{L}{p}(\Space{R}{})\) by either the maximal
operator~\eqref{eq:hardy-littlewood-comp} or by the
SIO~\eqref{eq:hilbert-transform}~\citelist{\cite{Stein93}*{\S~III.1.2
    and \S~III.4.3} \cite{DziubanskiPreisner10a}}. From this
perspective, the coincidence of both characterizations seems to be
non-trivial. On the contrast, we presented both the maximal operator
and SIO as compositions of certain co- and contravariant
transforms. Thus, these operators act between certain
\(\mathrm{Aff}\)-invariant subspaces, which we associated with
generalized Hardy spaces%
\index{space!Hardy!generalized}%
\index{Hardy!space!generalized} in Defn.~\ref{de:hardy-space}. For the
right choice of fiducial functionals, the coincidence of the
respective invariant subspaces is quite natural.

The potential of the group-theoretical approach is not limited to the
Hilbert space \(\FSpace{L}{2}(\Space{R}{})\).  One of possibilities is
to look for a suitable modification of Schur's Lemma~\ref{le:Schur},
say, to Banach spaces. However, we can proceed with the affine group
without such a generalisation. Here is an illustration to a classical
question of harmonic analysis: to identify the class of functions on
the real line such that \(\oper{M}_{v^*_0}^H \oper{W}\) becomes the
identity operator on it.

\begin{prop}  
  \label{pr:nt-limit}
  Let \(\FSpace{B}{}\)
  be the space of bounded uniformly continuous functions
  on the real line. Let \(F: \FSpace{B}{}\rightarrow \Space{R}{}\) be
  a fiducial functional such that:
  \begin{equation}
    \label{eq:zero-limit-fiducial}
    \lim_{a\rightarrow 0} F(
    \uir{}{\infty}(1/a,0) f )= 0, \quad \text{ for all } f\in
    \FSpace{B}{}\text{ such that } f(0)=0 
  \end{equation}
  and \(F(\uir{}{\infty}(1,b) f)\) is a continuous function of
  \(b\in\Space{R}{}\) for a given \(f\in\FSpace{B}{}\).
  
  Then, \(\oper{M}_{v^*_0}^H\circ \oper{W}_F\) is a constant multiple
  of the identity operator on \(\FSpace{B}{}\).
\end{prop}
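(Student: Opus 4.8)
The plan is to reduce the statement to a single non-tangential limit and then annihilate that limit using the two hypotheses. Throughout, since $(a,b)^{-1}=(1/a,-b/a)$ and the amplitude $a^{-1/p}$ in \eqref{eq:ax+b-repr-quasi-reg} is trivial for $p=\infty$, one has $\uir{}{\infty}(a,b)\mathbf 1=\mathbf 1$ for the constant function $\mathbf 1$ (with value $1$), and
\begin{equation*}
  [\oper{W}_F f](a,b)=F\bigl(\uir{}{\infty}(1/a,-b/a)f\bigr)=F(x\mapsto f(ax+b)),
\end{equation*}
so that \eqref{eq:nt-limit} gives $[\oper{M}_{v^*_0}^H\oper{W}_F f](t)=\varlimsup_{a\to 0,\ \modulus{b-t}<a}F(\uir{}{\infty}(1/a,-b/a)f)$. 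I expect the constant to be $c=F(\mathbf 1)$. (I read $F$ as a bounded linear functional on $\FSpace{B}{}$; since $b\mapsto\uir{}{\infty}(1,b)f$ is sup-norm continuous on $\FSpace{B}{}$, the stated continuity of $b\mapsto F(\uir{}{\infty}(1,b)f)$ is then automatic, and the genuinely new input is \eqref{eq:zero-limit-fiducial}, which encodes the behaviour along the dilation subgroup.)

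First I would localize at the base point. Write $f=f(t)\mathbf 1+h$ with $h=f-f(t)\mathbf 1\in\FSpace{B}{}$, $h(t)=0$. By linearity of $F$ and $\uir{}{\infty}(\cdot)\mathbf 1=\mathbf 1$ one gets $[\oper{W}_F f](a,b)=c\,f(t)+[\oper{W}_F h](a,b)$, and the constant summand $c\,f(t)$ passes through the $\varlimsup$. Next, using the intertwining property of Theorem~\ref{pr:inter1} with the element $(1,t)\in\mathrm{Aff}$, write $h=\uir{}{\infty}(1,t)g$ where $g(x)=h(x+t)=f(x+t)-f(t)$ lies in $\FSpace{B}{}$ and vanishes at $0$; then $[\oper{W}_F h](a,b)=[\Lambda(1,t)\oper{W}_F g](a,b)=[\oper{W}_F g](a,b-t)$. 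After the change of variable $b\mapsto b-t$ that carries the cone $\modulus{b-t}<a$ onto $\modulus{b}<a$, the whole proposition reduces to the single assertion: for every $g\in\FSpace{B}{}$ with $g(0)=0$,
\begin{equation*}
  \varlimsup_{\substack{a\to 0\\ \modulus{b}<a}}F\bigl(\uir{}{\infty}(1/a,-b/a)g\bigr)=0 .
\end{equation*}

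To prove this reduced assertion, factor $\uir{}{\infty}(1/a,-b/a)g=\uir{}{\infty}(1,-b/a)\psi_a$ with $\psi_a:=\uir{}{\infty}(1/a,0)g$, that is $\psi_a(x)=g(ax)$, noting $\modulus{b/a}<1$. Hypothesis \eqref{eq:zero-limit-fiducial} applied to $g$ gives $F(\psi_a)\to 0$ as $a\to 0$, which handles the vertical part. For the tilt, uniform continuity of $g$, with modulus of continuity $\omega_g$, yields $\norm[\infty]{\uir{}{\infty}(1,-s)\psi_a-\psi_a}=\sup_x\modulus{g(ax+as)-g(ax)}\le\omega_g(a)$ for all $\modulus{s}\le 1$; hence, by boundedness of $F$,
\begin{equation*}
  \sup_{\modulus{b}<a}\,\modulus{F(\uir{}{\infty}(1/a,-b/a)g)}\le\modulus{F(\psi_a)}+\norm{F}\,\omega_g(a),
\end{equation*}
which tends to $0$ as $a\to 0$; this is even stronger than the required $\varlimsup=0$. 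Tracing back, $[\oper{M}_{v^*_0}^H\oper{W}_F f](t)=c\,f(t)$ for every $f$ and $t$, i.e.\ $\oper{M}_{v^*_0}^H\circ\oper{W}_F=F(\mathbf 1)\,I$ on $\FSpace{B}{}$.

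The crux — and the only real difficulty — is exactly the last display: \eqref{eq:zero-limit-fiducial} controls only the dilation orbit ($b=0$), and one must upgrade it to the full non-tangential cone $\modulus{b-t}<a$. Working in $\FSpace{B}{}$, rather than with merely bounded functions, is precisely what makes the horizontal tilt $\uir{}{\infty}(1,-b/a)$ an $O(\omega_g(a))$ perturbation in sup-norm, uniformly over the cone; with $f$ only bounded this step, and indeed the conclusion, breaks down. The remaining ingredients — linearity and boundedness of $F$, and the continuity of $b\mapsto F(\uir{}{\infty}(1,b)f)$ — serve only to make the limit manipulations and the suprema over $b$ legitimate.
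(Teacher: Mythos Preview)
Your argument is correct, and in fact more explicit than the paper's. Both proofs share the same skeleton: reduce via translation covariance to the point \(t=0\), split \(f=f(0)\mathbf{1}+g\) with \(g(0)=0\), and show that the non-tangential limit of \(\oper{W}_Fg\) at the origin vanishes. The difference lies in how that last vanishing is established. The paper argues abstractly: since \(v^*_{(a,b)}(0)\) is an eigenvector for left dilations, one has \(\oper{M}_{v^*_0}^H\circ\Lambda(1/a,0)=\oper{M}_{v^*_0}^H\), hence \(l(g)=l(\uir{}{\infty}(1/a,0)g)\) for all \(a>0\), and then invokes~\eqref{eq:zero-limit-fiducial} together with the continuity in \(b\) to pass to the limit. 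Your route is more hands-on: you factor \((a,b)^{-1}=(1,-b/a)(1/a,0)\), use~\eqref{eq:zero-limit-fiducial} for the dilation factor, and control the residual shift \(\uir{}{\infty}(1,-b/a)\) (with \(|b/a|<1\)) by the modulus of continuity \(\omega_g(a)\) in the sup norm. This yields the quantitative bound \(\sup_{|b|<a}|[\oper{W}_Fg](a,b)|\le |F(\psi_a)|+\|F\|\,\omega_g(a)\to 0\), so you actually obtain a genuine limit rather than only a \(\varlimsup\), and you identify the constant as \(c=F(\mathbf{1})\) from the outset. The paper's approach better showcases the covariance machinery that is the paper's theme; yours makes transparent exactly where uniform continuity of \(f\) and boundedness of \(F\) enter, and why mere boundedness of \(f\) would not suffice.
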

\begin{proof}
  First of all we note that \(\oper{M}_{v^+_0}^H \oper{W}_F\) is a
  bounded operator on \(\FSpace{B}{}\).  Let
  \(v^*_{(a,b)}=\uir{}{\infty}(a,b) v^*\). Obviously,
  \(v^*_{(a,b)}(0)=v^*(-\frac{b}{a})\) is an eigenfunction for
  operators \(\Lambda(a',0)\), \(a'\in\Space[+]{R}{}\) of the left
  regular representation of \(\mathrm{Aff}\):
  \begin{equation}
    \label{eq:v-eigenfunctions1}
    \Lambda(a',0) v^*_{(a,b)}(0)= v^*_{(a,b)}(0).
  \end{equation}
  This and the left invariance of 
  pairing~\eqref{eq:invariance-pairing} imply that
  \(\oper{M}_{v^*_0}^H\circ \Lambda (1/a,0)=\oper{M}_{v^*_0}^H\) for
  any \((a,0)\in\mathrm{Aff}\). Then, applying intertwining
  properties~\eqref{eq:cov-trans-intertwine} we obtain that
  \begin{eqnarray*}
    [\oper{M}_{v^*_0}^H \circ \oper{W}_F f](0)
    &=& [\oper{M}_{v^*_0}^H\circ  \Lambda (1/a,0)\circ \oper{W}_F f](0)\\
    &=& [\oper{M}_{v^*_0}^H\circ  \oper{W}_{F} \circ \uir{}{\infty}(1/a,0)f](0).
  \end{eqnarray*}
  Using the limit \(a\rightarrow 0\)~\eqref{eq:zero-limit-fiducial}
  and the continuity of \(F\circ \uir{}{\infty}(1,b)\) we conclude that
  the linear functional \(l:f\mapsto [\oper{M}_{v^*_0}^H\circ \oper{W}_F
  f](0)\) vanishes for any \(f\in\Space{B}{}\) such that \(f(0)=0\). 
  Take a function \(f_1\in\FSpace{B}{}\) such that \(f_1(0)=1\) and
  define \(c=l(f_1)\). From linearity of \(l\), for any \(f\in
  \FSpace{B}{}\) we have:
  \begin{displaymath}
    l(f)=l(f-f(0)f_1+f(0)f_1)=l(f-f(0)f_1)+f(0)l(f_1)=cf(0).
  \end{displaymath}

  Furthermore, using intertwining
  properties~\eqref{eq:cov-trans-intertwine} and~\eqref{eq:inv-transform-intertwine}:
  \begin{eqnarray*}
    [\oper{M}_{v^*_0}^H\circ \oper{W}_F
  f](t)&=&  [\uir{}{\infty}(1,-t) \circ \oper{M}_{v^*_0}^H \circ \oper{W}_F
  f](0)\\
  &=&  [\oper{M}_{v^*_0}^H \circ \oper{W}_F \circ \uir{}{\infty}(1,-t)   f](0)\\
  &=&  l ( \uir{}{\infty}(1,-t)   f)\\
  &=&  c[ \uir{}{\infty}(1,-t)   f](0)\\
  &=&  cf(t).
  \end{eqnarray*}
  This completes the proof.
\end{proof}

To get the classical statement we need the following lemma.
\begin{lem}
  For \(w(t)\in\FSpace{L}{1}(\Space{R}{})\), define
  the fiducial functional on \(\FSpace {B}{}\):
  \begin{equation}
    \label{eq:fiducial-from-wavelet}
    F(f)=\int_{\Space{R}{}} f(t)\,w(t)\, dt.
  \end{equation}
  Then  \(F\) satisfies the conditions (and thus the conclusions) of
  Prop.~\ref{pr:nt-limit}.
\end{lem}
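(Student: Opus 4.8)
The plan is to unwind the two hypotheses of Prop.~\ref{pr:nt-limit} directly, using only the explicit form of the representation $\uir{}{\infty}$ and the dominated convergence theorem. The first step is to record that for $p=\infty$ the normalising factor $a^{-1/p}$ is $1$, so $[\uir{}{\infty}(a,b)f](x)=f((x-b)/a)$; in particular $[\uir{}{\infty}(1/a,0)f](x)=f(ax)$ and $[\uir{}{\infty}(1,b)f](x)=f(x-b)$. Substituting into \eqref{eq:fiducial-from-wavelet} then gives the two concrete expressions $F(\uir{}{\infty}(1/a,0)f)=\int_{\Space{R}{}}f(ax)\,w(x)\,dx$ and $F(\uir{}{\infty}(1,b)f)=\int_{\Space{R}{}}f(x-b)\,w(x)\,dx$, and everything reduces to elementary estimates on these integrals.

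For the first hypothesis \eqref{eq:zero-limit-fiducial}, take $f\in\FSpace{B}{}$ with $f(0)=0$. Continuity of $f$ at the origin gives $f(ax)\to 0$ pointwise in $x$ as $a\to 0$, while $\modulus{f(ax)\,w(x)}\le\norm[\infty]{f}\,\modulus{w(x)}$ furnishes an $L^1$-majorant independent of $a$, since $w\in\FSpace{L}{1}(\Space{R}{})$. Dominated convergence then yields $\lim_{a\to 0}F(\uir{}{\infty}(1/a,0)f)=\int_{\Space{R}{}}f(0)\,w(x)\,dx=0$, which is exactly \eqref{eq:zero-limit-fiducial}. For the second hypothesis I would estimate, as $b\to b_0$,
\[
  \modulus{F(\uir{}{\infty}(1,b)f)-F(\uir{}{\infty}(1,b_0)f)}
  \le\int_{\Space{R}{}}\modulus{f(x-b)-f(x-b_0)}\,\modulus{w(x)}\,dx
  \le\norm[\infty]{f(\cdot-b)-f(\cdot-b_0)}\,\norm[1]{w},
\]
and conclude that the left-hand side tends to $0$ because the translation $b\mapsto f(\cdot-b)$ is continuous in the supremum norm.

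The single point that deserves care — and, I think, the reason the ambient space in Prop.~\ref{pr:nt-limit} is the space of bounded \emph{uniformly} continuous functions rather than merely bounded continuous ones — is the last inequality above: the passage from the pointwise bound $\modulus{f(x-b)-f(x-b_0)}$ to a bound uniform in $x$ uses precisely the uniform continuity of $f$, and ordinary continuity would not force $\norm[\infty]{f(\cdot-b)-f(\cdot-b_0)}\to 0$. Apart from this observation the argument is a routine application of dominated convergence, so I do not expect any real obstacle; once both conditions are verified, the conclusion that $\oper{M}_{v^*_0}^H\circ\oper{W}_F$ is a constant multiple of the identity on $\FSpace{B}{}$ is immediate from Prop.~\ref{pr:nt-limit}.
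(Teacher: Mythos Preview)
Your proof is correct and follows essentially the same route as the paper: direct verification of the two hypotheses of Prop.~\ref{pr:nt-limit}, with the second handled identically via uniform continuity of \(f\). The only cosmetic difference is that for the first hypothesis you invoke dominated convergence, whereas the paper spells out the equivalent \(\varepsilon\)-splitting of the integral by hand; your version is a slight streamlining, not a different argument.
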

\begin{proof}
  Let \(f\) be a continuous bounded function such that
  \(f(0)=0\). For \(\varepsilon>0\) chose 
  \begin{itemize}
  \item   \(\delta>0\) such that  \(\modulus{f(t)}<\varepsilon\) for
    all \(\modulus{t}<\delta\);
  \item \(M>0\) such that
    \(\int_{\modulus{t}>M}\modulus{w(t)}\,dt<\varepsilon\).
  \end{itemize}
  Then, for  \(a<\delta/M\), we have the estimation: 
  \begin{eqnarray*}
    \modulus{F(\uir{}{\infty}(1/a,0) f )}&=&
    \modulus{\int_{\Space{R}{}} f\left(at\right)\,w(t)\, dt}\\
    &\leq&    \modulus{\int_{\modulus{t}<M } f\left(at\right)\,w(t)\, dt}
    +    \modulus{\int_{\modulus{t}>M } f\left(at\right)\,w(t)\, dt}\\
    &\leq&    \varepsilon (\norm[1]{w}    +   \norm[\infty]{f}) .
  \end{eqnarray*}
  Finally, for a uniformly continuous function \(g\) for
  \(\varepsilon>0\) there is \(\delta>0\) such that
  \(\modulus{g(t+b)-g(t)}<\varepsilon\) for all \(b<\delta\) and
  \(t\in\Space{R}{}\). Then
  \begin{displaymath}
    \modulus{F(\uir{}{\infty}(1,b) g )-F(g)}=\modulus{\int_{\Space{R}{}}
    (g(t+b)-g(t))\,w(t)\, dt}\leq \varepsilon \norm[1]{w}.
  \end{displaymath}
  This proves the continuity of \(F(\uir{}{\infty}(1,b) g )\) at
  \(b=0\) and, by the group property, at any other point as well.
\end{proof}
\begin{rem}
  A direct evaluation shows, that the constant \(c=l(f_1)\) from the
  proof of Prop.~\ref{pr:nt-limit} for fiducial
  functional~\eqref{eq:fiducial-from-wavelet} is equal to
  \(c=\int_{\Space{R}{}} w(t)\,dt\). Of course, for non-trivial
  boundary values we need \(c\neq 0\). On the other hand, 
  admissibility condition~\eqref{eq:admissibility-weaker} requires
  \(c=0\). Moreover, the classical harmonic analysis and the
  traditional wavelet construction are two ``orthogonal'' parts of the
  same covariant transform theory in the following sense. We can
  present a rather general bounded function \(w=w_a+w_p\) as a sum of
  an admissible mother wavelet \(w_a\) and a suitable multiple \(w_p\)
  of the Poisson kernel. An extension of this technique to unbounded
  functions leads to \emph{Calder\'on--Zygmund decomposition}%
  \index{Calder\'on--Zygmund!decomposition}%
  \index{decomposition!Calder\'on--Zygmund}~\cite{Stein93}*{\S~I.4}.
\end{rem}

The table integral \(\int_{\Space{R}{}} \frac{dx}{x^2+1}=\pi\) tells
that the ``wavelet''
\(p(t)=\frac{1}{\pi}\frac{1}{1+t^2}\)~\eqref{eq:poisson-kernel} is in
\(\FSpace{L}{1}(\Space{R}{})\) with \(c=1\), the corresponding wavelet
transform is the Poisson integral. Its boundary behaviour from
Prop.~\ref{pr:nt-limit} is the classical result,
cf.~\cite{Garnett07a}*{Ch.~I, Cor.~3.2}.  The comparison of our
arguments with the traditional proofs, e.g.  in~\cite{Garnett07a},
does not reveal any significant distinctions. We simply made an
explicit usage of the relevant group structure, which is implicitly
employed in traditional texts anyway, cf.~\cite{Burenkov12a}. Further
demonstrations of this type can be found
in~\cites{Albargi13a,ElmabrokHutnik12a}.

\section{\large Transported norms}
\label{sec:transport-norm}

If the functional \(F\) and the representation \(\uir{}{}\)
in~\eqref{eq:coheret-transf-gen} are both linear, then the resulting
covariant transform is a linear map. If \(\oper{W}_F\) is injective,
e.g. due to irreducibility of \(\uir{}{}\),
then \(\oper{W}_F\) transports a norm \(\norm{\cdot}\) defined on
\(V\) to a norm \(\norm[F]{\cdot}\) defined on the image space
\(\oper{W}_F V\) by the simple rule:
\begin{equation}
  \label{eq:transported-norm}
  \norm[F]{u}:=\norm{v}, \quad
  \text{ where the unique }
  v\in V \text{ is defined by } u=\oper{W}_F v.
\end{equation}
By the very definition, we have the following
\begin{prop}
  \begin{enumerate}
  \item \(\oper{W}_F\) is an isometry \((V,\norm{\cdot})\rightarrow
    (\oper{W}_F V, \norm[F]{\cdot})\).
  \item If the representation \(\uir{}{}\) acts on
    \((V,\norm{\cdot})\) by isometries then \(\norm[F]{\cdot}\) is
    left invariant.
  \end{enumerate}
\end{prop}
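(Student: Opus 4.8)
The plan is to read off both assertions directly from the defining equation~\eqref{eq:transported-norm}, using only the well-definedness of \(\norm[F]{\cdot}\) (guaranteed by the injectivity of \(\oper{W}_F\) assumed in the preamble) and the intertwining property of the covariant transform established in Theorem~\ref{pr:inter1}. No estimates are involved; the only thing to check is that the ingredients line up.

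First I would dispose of part~(i). By~\eqref{eq:transported-norm}, for every \(v\in V\) the element \(u=\oper{W}_F v\) is the unique preimage of itself, so \(\norm[F]{\oper{W}_F v}=\norm{v}\). Since \(\oper{W}_F\colon V\to\oper{W}_F V\) is onto by construction and injective by hypothesis, this identity says precisely that \(\oper{W}_F\) is an isometric bijection of \((V,\norm{\cdot})\) onto \((\oper{W}_F V,\norm[F]{\cdot})\).

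For part~(ii), take \(u\in\oper{W}_F V\) and write \(u=\oper{W}_F v\) with \(v\in V\) unique. By Corollary~\ref{co:pi} the image space is invariant under the left shifts, so \(\Lambda(g)u\in\oper{W}_F V\) and \(\norm[F]{\Lambda(g)u}\) is defined. The intertwining relation~\eqref{eq:cov-trans-intertwine} gives \(\Lambda(g)u=\Lambda(g)\oper{W}_F v=\oper{W}_F(\uir{}{}(g)v)\); by uniqueness of the preimage and the definition~\eqref{eq:transported-norm},
\begin{equation*}
  \norm[F]{\Lambda(g)u}=\norm{\uir{}{}(g)v}=\norm{v}=\norm[F]{u},
\end{equation*}
where the middle equality uses the hypothesis that \(\uir{}{}(g)\) acts by isometries on \((V,\norm{\cdot})\). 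Hence \(\norm[F]{\cdot}\) is left invariant.

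Since every step is a substitution, there is no real obstacle; the only point deserving attention is bookkeeping — making sure the preimage is invoked unambiguously (this is where injectivity of \(\oper{W}_F\) is essential) and that \(\Lambda(g)u\) is known in advance to lie in the domain of \(\norm[F]{\cdot}\), which is exactly what Corollary~\ref{co:pi} supplies.
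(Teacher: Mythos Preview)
Your proof is correct and matches the paper's intent: the paper gives no explicit proof at all, introducing the proposition with ``By the very definition, we have the following'', so your argument simply unpacks that claim using~\eqref{eq:transported-norm} and the intertwining relation~\eqref{eq:cov-trans-intertwine} exactly as expected.
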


A touch of non-triviality occurs if the transported norm can be
naturally expressed in the original terms of \(G\).
\begin{example}
  It is common to consider a unitary square integrable representation%
  \index{square integrable!representation}%
  \index{representation!square integrable} \(\uir{}{}\) and an
  admissible mother wavelet%
  \index{wavelet!mother!admissible}%
  \index{mother wavelet!admissible}%
  \index{admissible!mother wavelet} \(f\in V\).  In this case, 
  wavelet transform~\eqref{eq:wavelet-transf} becomes an isometry to
  square integrable functions on \(G\) with respect to a Haar
  measure~\cite{AliAntGaz00}*{Thm.~8.1.3}. In particular, for the
  affine group and setup of Example~\ref{ex:ax+b}, the wavelet
  transform with an admissible vector is a multiple of an isometry map
  from \(\FSpace{L}{2}(\Space{R}{})\) to the functions on the upper
  half-plane, i.e., the \(ax+b\) group, which are square integrable
  with respect to the Haar measure \(a^{-2}\,da\,db\).
\end{example}

A reader expects that there are other interesting examples of the
transported norms, which are not connected to the Haar integration.
\begin{example}
  In the setup of Example~\ref{ex:cauchy-integral}, consider the space
  \(\FSpace{L}{p}(\Space{R}{})\) with 
  representation~\eqref{eq:ax+b-repr-quasi-reg} of \(\mathrm{Aff}\) and 
  Poisson kernel \(p(t)\)~\eqref{eq:poisson-kernel} as an inadmissible
  mother wavelet. The norm transported by \(\oper{W}_P\) to the image
  space on \(\mathrm{Aff}\) is~\cite{Nikolski02a}*{\S~A.6.3}:
  \begin{equation}
    \label{eq:Hardy-norm-Aff}
    \norm[p]{u}=\sup_{a>0}\left(\int\limits_{-\infty}^\infty
      \modulus{u(a,b)}^p\,\frac{db}{a}\right)^{\frac{1}{p}}. 
  \end{equation}
  In the theory of Hardy spaces, the \(\FSpace{L}{p}\)-norm on the
  real line and transported norm~\eqref{eq:Hardy-norm-Aff} are
  naturally intertwined, cf.~\cite{Nikolski02a}*{Thm.~A.3.4.1(iii)}, and
  are used interchangeably.
\end{example}

The second possibility to transport a norm from \(V\) to a function
space on \(G\) uses an contravariant transform \(\oper{M}_v\):
\begin{equation}
  \label{eq:trans-norm-inverse}
  \norm[v]{u}:=\norm{\oper{M}_v u}.
\end{equation}
\begin{prop}
  \begin{enumerate}
  \item The contravariant transform \(\oper{M}_v\) is an isometry
    \((L,\norm[v]{\cdot})\rightarrow (V,\norm{\cdot})\).
  \item If the composition \(\oper{M}_v \circ \oper{W}_F=c I\) is a
    multiple of the identity on \(V\) then transported norms
    \(\norm[v]{\cdot}\)~\eqref{eq:trans-norm-inverse} and 
    \(\norm[F]{\cdot}\)~\eqref{eq:transported-norm} differ only by a
    constant multiplier.
  \end{enumerate}
\end{prop}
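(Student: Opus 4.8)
The plan is simply to unwind the two definitions of transported norm; no analysis is involved, so the proof will be short, and the only thing to watch is which space each norm is defined on.

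For part~(i) I would argue directly from~\eqref{eq:trans-norm-inverse}: by definition \(\norm[v]{u}=\norm{\oper{M}_v u}\) for every \(u\in L\), so \(\oper{M}_v\) transports \(\norm[v]{\cdot}\) to \(\norm{\cdot}\) literally on the nose and is therefore an isometry onto its image in \(V\). The one subtlety I would flag is that \(\norm[v]{\cdot}\) is a priori only a seminorm; it becomes a genuine norm exactly when \(\oper{M}_v\) is injective on \(L\), and otherwise one passes to the quotient \(L/\ker\oper{M}_v\), on which the statement holds verbatim. In the setting used in part~(ii) --- where \(\oper{M}_v\circ\oper{W}_F\) is a nonzero multiple of the identity --- injectivity of \(\oper{M}_v\) on the relevant subspace is automatic, so I would phrase (i) so that this point is handled cleanly.

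For part~(ii) I would start from the hypothesis \(\oper{M}_v\circ\oper{W}_F=c I\) on \(V\), which in particular forces \(\oper{W}_F V\subseteq L\), so that both \(\norm[F]{\cdot}\) and \(\norm[v]{\cdot}\) make sense on elements \(u=\oper{W}_F v\), \(v\in V\). Then \(\norm[F]{u}=\norm{v}\) by~\eqref{eq:transported-norm}, with \(v\) the unique \(\oper{W}_F\)-preimage (unique because \(\oper{W}_F\) is injective, as assumed before~\eqref{eq:transported-norm}), while on the other side
\[
  \norm[v]{u}=\norm{\oper{M}_v u}=\norm{\oper{M}_v\oper{W}_F v}=\norm{c v}=\modulus{c}\,\norm{v}=\modulus{c}\,\norm[F]{u}.
\]
Hence \(\norm[v]{\cdot}=\modulus{c}\,\norm[F]{\cdot}\) on the common domain \(\oper{W}_F V\), which is the assertion.

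The main obstacle --- such as it is --- is purely bookkeeping: matching the domains (one needs \(\oper{W}_F V\subseteq L\), which is precisely what the composition hypothesis supplies) and ensuring non-degeneracy (the constant \(c\) must be nonzero for ``a multiple of the identity'' to carry content here; if \(c=0\), then \(\norm[v]{\cdot}\) vanishes on \(\oper{W}_F V\) and the comparison is vacuous there). Beyond that the argument is a one-line substitution, so I would devote any remaining effort only to stating the two parts with the right hypotheses on injectivity and on \(c\).
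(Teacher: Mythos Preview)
Your proposal is correct and follows the same approach the paper implicitly takes: the proposition is stated there without proof, as an immediate consequence of the definitions~\eqref{eq:transported-norm} and~\eqref{eq:trans-norm-inverse}, and your argument is precisely the one-line substitution that justifies it. Your added remarks on the seminorm issue for \(\norm[v]{\cdot}\) and the degenerate case \(c=0\) go slightly beyond what the paper spells out, but they are accurate and worth keeping.
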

The above result is well-known for traditional wavelets.
\begin{example}
  In the setup of Example~\ref{ex:duflo-moor}, for a square integrable
  representation and two admissible mother wavelets \(v_0\) and
  \(w_0\) we know that \(\oper{M}_{w_0}\oper{W}_{v_0}=k
  I\)~\eqref{eq:wave-trans-inverse-ident}, thus transported norms
  \eqref{eq:transported-norm} and \eqref{eq:trans-norm-inverse} 
  differ by a constant multiplier. Thus, 
  norm~\eqref{eq:trans-norm-inverse} is also provided by the
  integration with respect to the Haar measure on \(G\).
\end{example}
In the theory of Hardy spaces the result is also classical.
\begin{example}
  For the fiducial functional \(F\) with 
  property~\eqref{eq:zero-limit-fiducial} and the contravariant
  transform \(\oper{M}_{v^*_0}^H\)~\eqref{eq:nt-limit},
  Prop.~\ref{pr:nt-limit} implies \(\oper{M}_{v^*_0}^H\circ
  \oper{W}_F=c I\). Thus, the norm transported to \(\mathrm{Aff}\) by
  \(\oper{M}_{v^*_0}^H\) from \(\FSpace{L}{p}(\Space{R}{})\) up to
  factor coincides with~\eqref{eq:Hardy-norm-Aff}. In other words, the
  transition to the boundary limit on the Hardy space is an isometric
  operator. This is again a classical result of the harmonic analysis,
  cf.~\cite{Nikolski02a}*{Thm.~A.3.4.1(ii)}.
\end{example}

The co- and contravariant transforms can be used to transport norms in
the opposite direction: from a classical space of functions on \(G\)
to a representation space \(V\).
\begin{example}
  Let \(V\) be the space of \(\sigma\)-finite signed measures of a
  bounded variation on the upper half-plane. Let the \(ax+b\) group
  acts on \(V\) by the representation adjoint to
  \([\uir{}{1}(a,b)f](x,y)=a^{-1}f(\frac{x-b}{a},\frac{y}{a})\) on
  \(\FSpace{L}{2}(\Space[+]{R}{2})\),
  cf.~\eqref{eq:ax+b-left-regular}. If the mother wavelet \(v_0\) is
  the indicator function of the square \(\{ 0<x<1, 0<y<1\}\), then the
  covariant transform of a measure \(\mu\) is
  \(\tilde{\mu}(a,b)=a^{-1}\mu(Q_{a,b})\), where \(Q_{a,b}\) is the
  square \(\{b<x<b+a, 0<y<a\}\). If we request that
  \(\tilde{\mu}(a,b)\) is a bounded function on the affine group, then
  \(\mu\) is a Carleson measure%
  \index{Carleson!measure}%
  \index{measure!Carleson}~\cite{Garnett07a}*{\S~I.5}. A norm
  transported from \(\FSpace{L}{\infty}(\mathrm{Aff})\) to the
  appropriate subset of \(V\) becomes the Carleson norm%
  \index{Carleson!norm}%
  \index{norm!Carleson} of measures. Indicator function of a
  tent\index{tent} taken as a mother wavelet will lead to an
  equivalent definition.
\end{example}

It was already mentioned in Rem.~\ref{re:grand-maximal-funct} and
Example~\ref{ex:atomic-decomposition} that we may be interested to
mix several different covariant and contravariant transforms. This
motivate the following statement.
\begin{prop}
  \label{pr:isometry-relation}
  Let \((V,\norm{\cdot})\) be a normed space and \(\uir{}{}\) be a
  continuous representation of a topological locally compact group
  \(G\) on \(V\). Let two fiducial operators \(F_1\) and \(F_2\)
  define the respective covariant transforms \(\oper{W}_1\) and
  \(\oper{W}_2\) to the same image space \(W=\oper{W}_1 V= \oper{W}_2
  V\).  Assume, there exists an contravariant transform
  \(\oper{M}: W \rightarrow V\) such that \(\oper{M}\circ
  \oper{W}_1=c_1 I\) and \(\oper{M}\circ \oper{W}_2=c_2 I\). Define by
  \(\norm[\oper{M}]{\cdot}\) the norm on \(U\) transpordef from \(V\)
  by \(\oper{M}\). Then
  \begin{equation}
    \label{eq:unitarity-property}
    \norm[\oper{M}]{\oper{W}_1 v_1+ \oper{W}_2 v_2}=\norm{c_1 v_1 +c_2
      v_2}, \quad \text{ for any } v_1, v_2\in V.
  \end{equation}
\end{prop}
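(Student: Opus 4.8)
The plan is to unwind the definition of the transported norm and use the linearity of the covariant transforms together with the two intertwining-type identities $\oper{M}\circ\oper{W}_1=c_1 I$ and $\oper{M}\circ\oper{W}_2=c_2 I$. By definition~\eqref{eq:trans-norm-inverse}, for any $w\in W$ we have $\norm[\oper{M}]{w}=\norm{\oper{M} w}$. So the left-hand side of~\eqref{eq:unitarity-property} equals $\norm{\oper{M}(\oper{W}_1 v_1+\oper{W}_2 v_2)}$.

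First I would observe that $\oper{M}$ is linear: this is needed to split the expression, and it follows because $\oper{M}$ is a contravariant transform defined by a (linear) pairing, or more simply because the hypothesis $\oper{M}\circ\oper{W}_i=c_i I$ together with surjectivity of the $\oper{W}_i$ onto $W$ already forces $\oper{M}$ to be determined on all of $W$; in any case the standing assumption in this section (see the opening of Section~\ref{sec:transport-norm}) is that $F$ and $\uir{}{}$, hence the relevant transforms, are linear. Using linearity,
\begin{displaymath}
  \oper{M}(\oper{W}_1 v_1+\oper{W}_2 v_2)
  =\oper{M}\oper{W}_1 v_1 + \oper{M}\oper{W}_2 v_2
  = c_1 v_1 + c_2 v_2 .
\end{displaymath}
Taking norms of both sides gives exactly $\norm[\oper{M}]{\oper{W}_1 v_1+\oper{W}_2 v_2}=\norm{c_1 v_1+c_2 v_2}$, which is~\eqref{eq:unitarity-property}.

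The only subtlety — and the step I would be most careful about — is the implicit claim that $\oper{W}_1 v_1+\oper{W}_2 v_2$ genuinely lies in the common image space $W$ where $\norm[\oper{M}]{\cdot}$ is defined, and that $\oper{M}$ is well-defined and linear on the linear span of such sums, not merely on each $\oper{W}_i V$ separately. Since $W=\oper{W}_1 V=\oper{W}_2 V$ is a linear subspace (it is the image of a linear map), it is closed under addition, so $\oper{W}_1 v_1+\oper{W}_2 v_2\in W$; and $\oper{M}$ restricted to $W$ is a single linear operator, so no ambiguity arises. Beyond this bookkeeping the argument is a one-line computation, so I would present it compactly: state that $\oper{M}$ is linear, compute $\oper{M}(\oper{W}_1 v_1+\oper{W}_2 v_2)=c_1 v_1+c_2 v_2$, and apply $\norm[\oper{M}]{w}=\norm{\oper{M} w}$.
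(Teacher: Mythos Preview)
Your proof is correct and follows exactly the same approach as the paper: unwind the transported norm via \(\norm[\oper{M}]{w}=\norm{\oper{M} w}\), use linearity of \(\oper{M}\), and apply \(\oper{M}\circ\oper{W}_i=c_i I\). The paper's proof is the same one-line computation, without the additional bookkeeping remarks you include.
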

\begin{proof}
  Indeed:
  \begin{equation*}
    \begin{split}
      \norm[\oper{M}]{\oper{W}_1 v_1+ \oper{W}_2 v_2}&=
      \norm{\oper{M}\circ\oper{W}_1 v_1+ \oper{M}\circ\oper{W}_2
        v_2}\\
      &=
      \norm{c_1 v_1+ c_2 v_2},
    \end{split}
  \end{equation*}
  by the definition of transported
  norm~\eqref{eq:trans-norm-inverse} and the assumptions
  \(\oper{M}\circ\oper{W}_i=c_i I\).
\end{proof}
Although the above result is simple, it does have important consequences.
\begin{cor}[Orthogonality Relation]
  Let \(\uir{}{}\) be a square integrable representation of a group
  \(G\) in a Hilbert space \(V\). Then, for any two admissible mother
  wavelets \(f\) and \(f'\) there exists a constant \(c\) such that:
  \begin{equation}
    \label{eq:orthogonality-wavelets}
    \int_G \scalar{v}{\uir{}{}(g)f}\,\overline{\scalar{v'}{\uir{}{}(g)f'}}\,dg
    =c\,\scalar{v}{v'} \quad \text{ for any }v_1,v_2\in V.
  \end{equation} 
  Moreover, the constant \(c=c(f',f)\) is a sesquilinear form of
  vectors \(f'\) and \(f\).
\end{cor}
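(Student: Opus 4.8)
The plan is to recognise the left‑hand side of~\eqref{eq:orthogonality-wavelets} as the scalar product in $\FSpace{L}{2}(G,dg)$ of the two wavelet (covariant) transforms~\eqref{eq:wavelet-transf} built from the admissible vectors $f$ and $f'$,
\begin{displaymath}
  \int_G \scalar{v}{\uir{}{}(g)f}\,\overline{\scalar{v'}{\uir{}{}(g)f'}}\,dg
  =\scalar[\FSpace{L}{2}(G)]{\oper{W}_{f}v}{\oper{W}_{f'}v'},
\end{displaymath}
and then to produce a bounded operator $T$ on $V$ with $\scalar[\FSpace{L}{2}(G)]{\oper{W}_f v}{\oper{W}_{f'}v'}=\scalar{Tv}{v'}$, to show that $T$ commutes with $\uir{}{}$, and to conclude $T=cI$ by Schur's Lemma~\ref{le:Schur}. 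In this guise the assertion is the sesquilinear (polarised) counterpart of the norm identity~\eqref{eq:unitarity-property} of Prop.~\ref{pr:isometry-relation}, and I would lean on the same mechanism: the Schur-type relation $\oper{M}_{w_0}\oper{W}_{v_0}=cI$ of Example~\ref{ex:duflo-moor}.

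The first step I would carry out is the elementary remark that, for a \emph{unitary} representation $\uir{}{}$, the reconstruction (contravariant) transform $\oper{M}_{f'}$ of Example~\ref{ex:Haar-pairing} is the Hilbert-space adjoint $\oper{W}_{f'}^{*}$ of the covariant transform: using $\oper{W}_{f'}v'(g)=\scalar{v'}{\uir{}{}(g)f'}$, unitarity, and Fubini,
\begin{displaymath}
  \scalar[\FSpace{L}{2}(G)]{u}{\oper{W}_{f'}v'}
  =\int_G u(g)\,\scalar{\uir{}{}(g)f'}{v'}\,dg
  =\scalar{\oper{M}_{f'}u}{v'}
\end{displaymath}
for $u\in\FSpace{L}{2}(G)$, $v'\in V$. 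Hence $\scalar[\FSpace{L}{2}(G)]{\oper{W}_f v}{\oper{W}_{f'}v'}=\scalar{\oper{M}_{f'}\oper{W}_f v}{v'}$, so it suffices to treat $T=\oper{M}_{f'}\oper{W}_f\colon V\to V$. This $T$ is bounded because admissibility~\eqref{eq:square-integrable} makes both $\oper{W}_f$ and $\oper{W}_{f'}$ bounded maps into $\FSpace{L}{2}(G,dg)$; it commutes with every $\uir{}{}(g)$ by the intertwining relations~\eqref{eq:cov-trans-intertwine} and~\eqref{eq:inv-transform-intertwine} (indeed $T$ is the composition $\oper{M}_w\circ\oper{W}_F$ discussed just before Lemma~\ref{le:Schur}); therefore, by irreducibility and Schur's Lemma~\ref{le:Schur}, $T=cI$ for a scalar $c$. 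This is exactly Example~\ref{ex:duflo-moor} applied with $v_0=f$, $w_0=f'$, and it already gives~\eqref{eq:orthogonality-wavelets}.

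It remains to read off the structure of $c=c(f',f)$. Since $\oper{W}_f v(g)=\scalar{v}{\uir{}{}(g)f}$ is linear in $v$ and conjugate-linear in $f$, the adjoint $\oper{M}_{f'}=\oper{W}_{f'}^{*}$ is linear in $f'$ (the adjoint conjugates scalars), so $T=\oper{M}_{f'}\oper{W}_f$, and with it the constant $c$, is linear in $f'$ and conjugate-linear in $f$; that is, $c(f',f)$ is a sesquilinear form, as claimed. The only ingredient that is not purely formal is already isolated in Example~\ref{ex:duflo-moor}: the fact that the square-integrability condition~\eqref{eq:square-integrable} turns $\oper{W}_f$ into a bounded (indeed, isometric up to a constant) map into $\FSpace{L}{2}(G,dg)$. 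I expect this Duflo--Moore-type boundedness to be the substantive step; once it, together with Schur's Lemma and the intertwining properties of the covariant and contravariant transforms, is in place, the remainder of the argument is routine bookkeeping. One caveat worth flagging: Schur's Lemma requires $\uir{}{}$ to be irreducible, which is the standing assumption in the square-integrable setting of Example~\ref{ex:duflo-moor}.
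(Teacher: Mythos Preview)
Your argument is correct and is in fact the standard direct route: identify \(\oper{M}_{f'}=\oper{W}_{f'}^{*}\) on \(\FSpace{L}{2}(G)\), rewrite the left-hand side as \(\scalar{\oper{M}_{f'}\oper{W}_f v}{v'}\), and apply Schur's Lemma~\ref{le:Schur} (equivalently Example~\ref{ex:duflo-moor}) to the intertwiner \(T=\oper{M}_{f'}\oper{W}_f\). The sesquilinearity of \(c\) then drops out from the linearity of \(\oper{M}_{f'}\) in \(f'\) and the conjugate-linearity of \(\oper{W}_f\) in \(f\), exactly as you say.

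The paper, by contrast, deliberately derives the corollary from Prop.~\ref{pr:isometry-relation}: it fixes the single contravariant transform \(\oper{M}_f\), uses \(\oper{M}_f\circ\oper{W}_f=I\) and \(\oper{M}_f\circ\oper{W}_{f'}=\bar{c}I\) from Example~\ref{ex:duflo-moor}, obtains the \emph{norm} identity \(\norm[\oper{M}]{\oper{W}_f v+\oper{W}_{f'}v'}=\norm{v+\bar{c}v'}\) from~\eqref{eq:unitarity-property}, and then recovers the inner-product identity~\eqref{eq:orthogonality-wavelets} via polarisation. So both proofs rest on the same Schur-type relation \(\oper{M}\circ\oper{W}=cI\); the difference is that you exploit the adjointness \(\oper{M}_{f'}=\oper{W}_{f'}^{*}\) to go straight to the sesquilinear form, while the paper passes through the transported-norm framework of Section~\ref{sec:transport-norm} and then polarises. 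Your route is shorter and more self-contained; the paper's route is chosen to exhibit the orthogonality relation as an instance of the general norm-transport machinery. Your caveat about irreducibility is apt and is indeed implicit in the square-integrable setting invoked.
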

\begin{proof}
  We can derive~\eqref{eq:orthogonality-wavelets}
  from~\eqref{eq:unitarity-property} as follows. Let \(\oper{M}_f\) be
  the inverse wavelet transform~\eqref{eq:wavelet-reconstruction}
  defined by the admissible vector \(f\), then \(\oper{M}_f\circ
  \oper{W}_f=I\) on \(V\) providing the right scaling of \(f\).
  Furthermore, \(\oper{M}_f\circ \oper{W}_{f'}=\bar{c}I\)
  by~\eqref{eq:wave-trans-inverse-ident} for some complex constant
  \(c\). Thus, by~\eqref{eq:unitarity-property}:
  \begin{displaymath}
    \norm[\oper{M}]{\oper{W}_f v +\oper{W}_{f'}v'}=\norm{v+\bar{c}v'}.
  \end{displaymath}
  Now, through the polarisation identity~\cite{KirGvi82}*{Problem~476}
  we get the equality~\eqref{eq:orthogonality-wavelets} of inner products.
\end{proof}
The above result is known as the \emph{orthogonality relation}%
\index{orthogonality!relation}%
\index{relation!orthogonality} in the theory of wavelets, for some
further properties of the constant \(c\)
see~\cite{AliAntGaz00}*{Thm.~8.2.1}.

Here is an application of Prop.~\ref{pr:isometry-relation} to harmonic
analysis, cf.~\cite{Grafakos08}*{Thm.~4.1.7}:
\begin{cor}
  \label{co:analytic-extension}
  The covariant transform \(\oper{W}_q\) with conjugate Poisson
  kernel \(q\)~\eqref{eq:conj-poisson-kernel} is a
  bounded map from \((\FSpace{L}{2}(\Space{R}{}),\norm{\cdot})\) to
  \((\FSpace{L}{}(\mathrm{Aff}), \norm[2]{\cdot})\) with norm
  \(\norm[2]{\cdot}\)~\eqref{eq:Hardy-norm-Aff}.  Moreover:
  \begin{displaymath}
    \norm[2]{\oper{W}_q f}= \norm{f},\qquad \text{ for all } f \in \FSpace{L}{2}(\Space{R}{}).
  \end{displaymath}
\end{cor}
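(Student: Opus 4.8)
The plan is to factor the conjugate–Poisson covariant transform $\oper{W}_q$ through the Poisson covariant transform $\oper{W}_p$ and the Hilbert transform, and then to invoke two facts already in the text: that $\oper{W}_p$ carries the $\FSpace{L}{2}$-norm $\norm{\cdot}$ onto the norm $\norm[2]{\cdot}$ of~\eqref{eq:Hardy-norm-Aff} (the transported-norm Example in Section~\ref{sec:transport-norm}, with $p=2$), and that the Hilbert transform is unitary on $\FSpace{L}{2}(\Space{R}{})$, since $\oper{H}=(-\rmi I_{\FSpace{H}{2}})\oplus(\rmi I_{\FSpace[\perp]{H}{2}})$.

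First I would write both covariant transforms explicitly. Applying~\eqref{eq:ax+b-repr-quasi-reg} with $p=2$ and the substitution $t=ax+b$ in~\eqref{eq:poisson-kernel} and~\eqref{eq:conj-poisson-kernel} gives $[\oper{W}_p f](a,b)=a^{1/2}\,[\oper{P}f](b,a)$ and $[\oper{W}_q f](a,b)=a^{1/2}\,[\tilde{\oper{P}}f](b,a)$, where $\oper{P}$ is the Poisson integral~\eqref{eq:poisson-int} and $\tilde{\oper{P}}$ is the conjugate Poisson integral, with kernel $\frac{1}{\pi}\frac{b-t}{(b-t)^2+a^2}$ at height $a$. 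Since $(a^{1/2})^2 a^{-1}=1$, the weight is exactly absorbed by the factor $a^{-1}$ in~\eqref{eq:Hardy-norm-Aff}, so that
\begin{displaymath}
  \norm[2]{\oper{W}_q f}^{2}=\sup_{a>0}\int_{\Space{R}{}}\modulus{[\tilde{\oper{P}}f](b,a)}^{2}\,db .
\end{displaymath}

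The crux is the identity $\oper{W}_q f=\oper{W}_p(\oper{H}f)$, that is, the conjugate Poisson extension of $f$ is the Poisson extension of $\oper{H}f$, cf.~\cite{Grafakos08}*{Thm.~4.1.7}. In the language of this paper it follows from the Fourier picture of Prop.~\ref{pr:fourier-decompos}: at height $a$ the kernel of $\tilde{\oper{P}}$ has multiplier $-\rmi\,\mathrm{sgn}(\xi)\,\rme^{-2\pi a\modulus{\xi}}$, that of $\oper{P}$ has multiplier $\rme^{-2\pi a\modulus{\xi}}$, and $-\rmi\,\mathrm{sgn}(\xi)$ is precisely the multiplier of $\oper{H}$ --- the block form $-\rmi$ on $\oper{F}\FSpace{H}{2}=\FSpace{L}{2}(0,\infty)$ and $+\rmi$ on $\oper{F}\FSpace[\perp]{H}{2}$ --- and the identity is also consistent with Example~\ref{ex:hilbert-trans-analytic}. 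Granting it,
\begin{displaymath}
  \norm[2]{\oper{W}_q f}=\norm[2]{\oper{W}_p(\oper{H}f)}=\norm{\oper{H}f}=\norm{f},
\end{displaymath}
where the middle equality is the transported-norm property of $\oper{W}_p$ and the last one the unitarity of $\oper{H}$. Boundedness of $\oper{W}_q\colon(\FSpace{L}{2}(\Space{R}{}),\norm{\cdot})\to(\FSpace{L}{}(\mathrm{Aff}),\norm[2]{\cdot})$ is then immediate.

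The single non-formal ingredient is the identity $\oper{W}_q=\oper{W}_p\circ\oper{H}$, and I expect it to be the only point needing care (the sign conventions in~\eqref{eq:conj-poisson-kernel} versus those fixing $\oper{H}$). A reader preferring a self-contained route can bypass it: for fixed $a>0$, Plancherel gives $\int_{\Space{R}{}}\modulus{[\tilde{\oper{P}}f](b,a)}^{2}\,db=\int_{\Space{R}{}}\rme^{-4\pi a\modulus{\xi}}\modulus{\hat f(\xi)}^{2}\,d\xi$, which is $\le\norm{f}^{2}$ for every $a$ and increases to $\norm{f}^{2}$ as $a\to 0^{+}$, so its supremum over $a>0$ equals $\norm{f}^{2}$. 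Either way, the substantive observation is that the choice $p=2$ is exactly what makes $\oper{W}_q$ an isometry onto its image for the Hardy-type norm~\eqref{eq:Hardy-norm-Aff}.
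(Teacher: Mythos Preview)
Your argument is correct. Both your route and the paper's hinge on the same fact---that the Hilbert transform acts as $-\rmi I$ on $\FSpace{H}{2}$ and $\rmi I$ on $\FSpace[\perp]{H}{2}$---but the packaging differs. You factor $\oper{W}_q=\oper{W}_p\circ\oper{H}$ and then chain the isometry of $\oper{W}_p$ (Example in Section~\ref{sec:transport-norm}) with the unitarity of $\oper{H}$. The paper instead stays inside the covariant/contravariant framework: it invokes Example~\ref{ex:hilbert-trans-analytic}, which says $\oper{M}_{v^+_0}^H\circ\oper{W}_q=-\rmi I_{\FSpace{H}{2}}\oplus\rmi I_{\FSpace[\perp]{H}{2}}$, and then applies the abstract isometry relation of Prop.~\ref{pr:isometry-relation} to the decomposition $f=u+u^\perp$, obtaining $\norm[2]{\oper{W}_q f}=\norm{-\rmi u+\rmi u^\perp}=\norm{f}$. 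Your approach is more classical and self-contained; the paper's emphasises that the result is a formal consequence of the transported-norm machinery just developed, which is its expository point. Your Plancherel alternative is a third, entirely elementary route that bypasses the group-theoretic apparatus altogether.
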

\begin{proof}
  As we establish in Example~\ref{ex:hilbert-trans-analytic} for 
  contravariant transform
  \(\oper{M}_{{v}^+_0}^H\)~\eqref{eq:norm-limit},
  \(\oper{M}_{{v}^+_0}^H \circ \oper{W}_q=-\rmi I\) and \(\rmi I\) on
  \(\FSpace{H}{2}\) and \(\FSpace[\perp]{H}{2}\), respectively. 
  Take the unique presentation \(f=u+u^\perp\), for \(u\in
  \FSpace{H}{2}\) and \(u^\perp\in \FSpace[\perp]{H}{2}\). Then,
  by~\eqref{eq:unitarity-property}
  \begin{displaymath}
    \norm[2]{\oper{W}_q f}=\norm{-\rmi u+\rmi u ^\perp}= \norm{u+u^\perp}=\norm{f}.
  \end{displaymath}
  This completes the proof.
\end{proof}

\section{\large Conclusion}
\label{sec:conclusions}

We demonstrated that both, real and complex, techniques in harmonic
analysis have the same group-theoretical origin. Moreover, they are
complemented by the wavelet construction. Therefore, there is no any
confrontation between these approaches and they can be lined up as in
Table~\ref{tab:corr-betw-diff}.  In other words, the binary opposition
of the real and complex methods resolves via Kant's triad
thesis-antithesis-synthesis: complex-real-covariant.\medskip

\section*{\large Acknowledgements}

I am grateful to A. Albargi for careful
reading of the paper and useful comments. Prof.~D.~Yakubovich pointed
out some recent publications in the field. The anonymous referee made
many useful suggestions which were incorporated into the paper with
gratitude.  

\begin{sidewaystable}
  \begin{tabular}{||p{.31\textheight}|p{.31\textheight}|p{.31\textheight}||}
    \hline \hline
    \begin{center}
      \textbf{Covariant scheme} 
    \end{center}     & 
    \begin{center}
      \textbf{Complex variable}
    \end{center} &
    \begin{center}
      \textbf{Real variable}
    \end{center}\\
    \hline \hline Covariant transform%
    \index{covariant!transform}%
    \index{transform!covariant} is\par \(\oper{W}_F^{\uir{}{}}:
    v\mapsto \hat{v}(g) = F(\uir{}{}(g^{-1}) v)\).\par
    In particular, the wavelet transform%
    \index{wavelet!transform}%
    \index{transform!wavelet} for the mother wavelet \(v_0\) is
    \(\tilde {v}(g) = \scalar{ v}{\uir{}{}(g)v_0}\).  & The Cauchy
    integral%
    \index{integral!Cauchy}%
    \index{Cauchy!integral} is generated by the mother wavelet
    \(\frac{1}{2\pi\rmi}\frac{1}{x+\rmi}\). \par
    The Poisson integral%
    \index{Poisson kernel}%
    \index{kernel!Poisson} is generated by the mother wavelet
    \(\frac{1}{\pi}\frac{1}{x^2+1}\)& The averaging operator
    \(\tilde{f}(b)=\frac{1}{2a}\int\limits_{b-a}^{b+a} f(t)\,dt\) is
    defined by the mother wavelet \(\chi_{[-1,1]}(t)\),\par
    to average the modulus of \(f(t)\) we use all elements of the unit
    ball in
    \(\FSpace{L}{\infty}[-1,1]\).\\
    \hline The covariant transform maps vectors to functions on \(G\)
    or, in the induced case, to functions on the homogeneous space
    \(G/H\).& Functions are mapped from the real line to the upper
    half-plane parametrised by either the \(ax+b\)-group or the
    homogeneous space \(\SL/K\)\index{$\SL$}%
\index{group!$\SL$}.& Functions are mapped from the real
    line to the upper half-plane parametrised by either the
    \(ax+b\)-group or the homogeneous
    space \(\SL/A\).\\
    \hline Annihilating action on the mother wavelet produces
    functional relation on the image of the covariant transform & The
    operator \(-d\uir{\mathsf{A}}{} -\rmi
    d\uir{\mathsf{N}}{}=I+(x+\rmi)\frac{d}{dx}\) annihilates the mother
    wavelet \(\frac{1}{2\pi\rmi}\frac{1}{x+\rmi}\), thus the image of
    wavelet transform is in the kernel of the Cauchy--Riemann operator%
  \index{Cauchy--Riemann operator}%
  \index{operator!Cauchy--Riemann}
    \(-\linv{\mathsf{A}}+\rmi\linv{\mathsf{N}}=\rmi
    a(\partial_b+\rmi\partial_a)\). Similarly, for the Laplace
    operator.& The mother wavelet \(v_0=\chi_{[-1,1]}\) satisfies the equality
    \(\chi_{[-1,1]}=\chi_{[-1,0]}+\chi_{[0,1]}\), where both terms are
    again scaled and shifted \(v_0\). The image of the wavelet
    transform is suitable for the stopping time argument%
    \index{stopping time argument} and the dyadic squares%
    \index{dyadic!squares}%
    \index{squares!dyadic} technique.
    \\
    \hline An invariant pairing
    \(\scalar{\cdot}{\cdot}\) generates  the contravariant transform%
    \index{contravariant!transform}%
    \index{transform!contravariant}\par
    \([\oper{M}_{w_0}^{\uir{}{}} f]
    \scalar{f(g)}{\uir{}{}(g)w_0}\) for & The contravariant transform with the
    invariant Hardy pairing on the \(ax+b\) group produces boundary
    values of functions on the real line.&
    The covariant transform with the invariant \(\sup\) pairing
    produces the vertical%
  \index{vertical!maximal functions}%
  \index{maximal functions!vertical} and non-tangential%
  \index{non-tangential!maximal functions}%
  \index{maximal functions!non-tangential} maximal functions.
    \\
    \hline The composition \(\oper{M}_v \circ \oper{W}_F\) of the
    covariant and contravariant transforms is a multiple of the
    identity on  irreducible components.& SIO%
  \index{singular!integral operator}%
  \index{operator!integral!singular} is a composition of the Cauchy
  integral and its boundary value.  & The Hardy--Littlewood maximal function%
  \index{Hardy--Littlewood!maximal functions}%
  \index{maximal functions!Hardy--Littlewood} is the composition of the
  averaging operator and the contravariant transform from the invariant
  \(\sup\) pairing. 
    \\
    \hline The Hardy space is an invariant subspace of the group
    representation. & The Hardy space consists of the limiting values
    of the Cauchy integral. SIO is bounded on this space. & The
    Hardy--Littlewood maximal operator is bounded on
    the Hardy space \(\FSpace{H}{p}\) .
    \\
    \hline \hline
  \end{tabular}
  \medskip
  \caption{The correspondence between different elements of
    harmonic analysis.}
  \label{tab:corr-betw-diff}
\end{sidewaystable}

\newpage
{\small
\providecommand{\noopsort}[1]{} \providecommand{\printfirst}[2]{#1}
  \providecommand{\singleletter}[1]{#1} \providecommand{\switchargs}[2]{#2#1}
  \providecommand{\irm}{\textup{I}} \providecommand{\iirm}{\textup{II}}
  \providecommand{\vrm}{\textup{V}} \providecommand{\cprime}{'}
  \providecommand{\eprint}[2]{\texttt{#2}}
  \providecommand{\myeprint}[2]{\texttt{#2}}
  \providecommand{\arXiv}[1]{\myeprint{http://arXiv.org/abs/#1}{arXiv:#1}}
  \providecommand{\doi}[1]{\href{http://dx.doi.org/#1}{doi:
  #1}}\providecommand{\CPP}{\texttt{C++}}
  \providecommand{\NoWEB}{\texttt{noweb}}
  \providecommand{\MetaPost}{\texttt{Meta}\-\texttt{Post}}
  \providecommand{\GiNaC}{\textsf{GiNaC}}
  \providecommand{\pyGiNaC}{\textsf{pyGiNaC}}
  \providecommand{\Asymptote}{\texttt{Asymptote}}
\begin{bibdiv}
\begin{biblist}

\bib{Albargi13a}{article}{
      author={Albargi, Amer},
       title={Some estimations for covariant transforms},
        date={2013},
        note={(In preparation)},
}

\bib{AliAntGaz00}{book}{
      author={Ali, Syed~Twareque},
      author={Antoine, Jean-Pierre},
      author={Gazeau, Jean-Pierre},
       title={Coherent states, wavelets and their generalizations},
      series={Graduate Texts in Contemporary Physics},
   publisher={Springer-Verlag},
     address={New York},
        date={2000},
        ISBN={0-387-98908-0},
      review={\MR{2002m:81092}},
}

\bib{BraDelSom82}{book}{
      author={Brackx, F.},
      author={Delanghe, Richard},
      author={Sommen, F.},
       title={Clifford analysis},
      series={Research Notes in Mathematics},
   publisher={Pitman (Advanced Publishing Program)},
     address={Boston, MA},
        date={1982},
      volume={76},
        ISBN={0-273-08535-2},
      review={\MR{85j:30103}},
}

\bib{Burenkov12a}{article}{
      author={{Burenkov}, V.I.},
       title={{Recent progress in studying the boundedness of classical
  operators of real analysis in general Morrey-type spaces. I.}},
    language={English},
        date={2012},
        ISSN={2077-9879},
     journal={{Eurasian Math. J.}},
      volume={3},
      number={3},
       pages={11\ndash 32},
}

\bib{ChristensenOlafsson09a}{article}{
      author={Christensen, Jens~Gerlach},
      author={{\'O}lafsson, Gestur},
       title={Examples of coorbit spaces for dual pairs},
        date={2009},
        ISSN={0167-8019},
     journal={Acta Appl. Math.},
      volume={107},
      number={1--3},
       pages={25\ndash 48},
      review={\MR{MR2520008}},
}

\bib{CoifmanJonesSemmes89}{article}{
      author={Coifman, R.~R.},
      author={Jones, Peter~W.},
      author={Semmes, Stephen},
       title={Two elementary proofs of the {$L^2$} boundedness of {C}auchy
  integrals on {L}ipschitz curves},
        date={1989},
        ISSN={0894-0347},
     journal={J. Amer. Math. Soc.},
      volume={2},
      number={3},
       pages={553\ndash 564},
         url={http://dx.doi.org/10.2307/1990943},
      review={\MR{986825 (90k:42017)}},
}

\bib{DufloMoore}{article}{
      author={Duflo, M.},
      author={Moore, Calvin~C.},
       title={On the regular representation of a nonunimodular locally compact
  group},
        date={1976},
     journal={J. Functional Analysis},
      volume={21},
      number={2},
       pages={209\ndash 243},
      review={\MR{52 \#14145}},
}

\bib{DziubanskiPreisner10a}{article}{
      author={Dziuba{\'n}ski, Jacek},
      author={Preisner, Marcin},
       title={Riesz transform characterization of {H}ardy spaces associated
  with {S}chr\"odinger operators with compactly supported potentials},
        date={2010},
        ISSN={0004-2080},
     journal={Ark. Mat.},
      volume={48},
      number={2},
       pages={301\ndash 310},
         url={http://dx.doi.org/10.1007/s11512-010-0121-5},
      review={\MR{2672611 (2011i:35036)}},
}

\bib{ElmabrokHutnik12a}{article}{
      author={Elmabrok, Abdelhamid~S.},
      author={Hutn{\'{\i}}k, Ondrej},
       title={Induced representations of the affine group and intertwining
  operators: {I}. {A}nalytical approach},
        date={2012},
        ISSN={1751-8113},
     journal={J. Phys. A},
      volume={45},
      number={24},
       pages={244017, 15},
         url={http://dx.doi.org/10.1088/1751-8113/45/24/244017},
      review={\MR{2930512}},
}

\bib{FeichGroech89a}{article}{
      author={Feichtinger, Hans~G.},
      author={Gr{\"o}chenig, K.~H.},
       title={Banach spaces related to integrable group representations and
  their atomic decompositions. {I}},
        date={1989},
        ISSN={0022-1236},
     journal={J. Funct. Anal.},
      volume={86},
      number={2},
       pages={307\ndash 340},
         url={http://dx.doi.org/10.1016/0022-1236(89)90055-4},
      review={\MR{1021139 (91g:43011)}},
}

\bib{FollStein82}{book}{
      author={Folland, G.B.},
      author={Stein, E.M.},
       title={Hardy spaces on homogeneous group},
   publisher={Princeton University Press},
     address={Princeton, New Jersey},
        date={{\noopsort{}}1982},
}

\bib{Folland95}{book}{
      author={Folland, Gerald~B.},
       title={A course in abstract harmonic analysis},
    language={English},
   publisher={{Studies in Advanced Mathematics. Boca Raton, FL: CRC Press.}},
        date={1995},
}

\bib{Fuhr05a}{book}{
      author={F{\"u}hr, Hartmut},
       title={Abstract harmonic analysis of continuous wavelet transforms},
      series={Lecture Notes in Mathematics},
   publisher={Springer-Verlag},
     address={Berlin},
        date={2005},
      volume={1863},
        ISBN={3-540-24259-7},
      review={\MR{MR2130226 (2006m:43003)}},
}

\bib{Garnett07a}{book}{
      author={Garnett, John~B.},
       title={Bounded analytic functions},
     edition={first},
      series={Graduate Texts in Mathematics},
   publisher={Springer},
     address={New York},
        date={2007},
      volume={236},
        ISBN={978-0-387-33621-3; 0-387-33621-4},
      review={\MR{2261424 (2007e:30049)}},
}

\bib{Grafakos08}{book}{
      author={Grafakos, Loukas},
       title={Classical {F}ourier analysis},
     edition={Second},
      series={Graduate Texts in Mathematics},
   publisher={Springer},
     address={New York},
        date={2008},
      volume={249},
        ISBN={978-0-387-09431-1},
      review={\MR{2445437}},
}

\bib{Kirillov76}{book}{
      author={Kirillov, A.~A.},
       title={Elements of the theory of representations},
   publisher={Springer-Verlag},
     address={Berlin},
        date={1976},
        note={Translated from the Russian by Edwin Hewitt, Grundlehren der
  Mathematischen Wissenschaften, Band 220},
      review={\MR{54 \#447}},
}

\bib{Kirillov04a}{book}{
      author={Kirillov, A.~A.},
       title={Lectures on the orbit method},
      series={Graduate Studies in Mathematics},
   publisher={American Mathematical Society},
     address={Providence, RI},
        date={2004},
      volume={64},
        ISBN={0-8218-3530-0},
      review={\MR{2069175 (2005c:22001)}},
}

\bib{KirGvi82}{book}{
      author={Kirillov, Alexander~A.},
      author={Gvishiani, Alexei~D.},
       title={Theorems and problems in functional analysis},
      series={Problem Books in Mathematics},
   publisher={Springer-Verlag},
     address={New York},
        date={{\noopsort{}1982}},
}

\bib{Kisil94e}{article}{
      author={Kisil, Vladimir~V.},
       title={Relative convolutions. {I}. {P}roperties and applications},
        date={1999},
        ISSN={0001-8708},
     journal={Adv. Math.},
      volume={147},
      number={1},
       pages={35\ndash 73},
        note={\arXiv{funct-an/9410001},
  \href{http://www.idealibrary.com/links/doi/10.1006/aima.1999.1833}{On-line}.
  \Zbl{933.43004}},
      review={\MR{MR1725814 (2001h:22012)}},
}

\bib{Kisil98a}{article}{
      author={Kisil, Vladimir~V.},
       title={Wavelets in {B}anach spaces},
        date={1999},
        ISSN={0167-8019},
     journal={Acta Appl. Math.},
      volume={59},
      number={1},
       pages={79\ndash 109},
        note={\arXiv{math/9807141},
  \href{http://dx.doi.org/10.1023/A:1006394832290}{On-line}},
      review={\MR{MR1740458 (2001c:43013)}},
}

\bib{Kisil02a}{inproceedings}{
      author={Kisil, Vladimir~V.},
       title={Spectrum as the support of functional calculus},
        date={2004},
   booktitle={Functional analysis and its applications},
      series={North-Holland Math. Stud.},
      volume={197},
   publisher={Elsevier},
     address={Amsterdam},
       pages={133\ndash 141},
        note={\arXiv{math.FA/0208249}},
      review={\MR{MR2098877}},
}

\bib{Kisil05a}{article}{
      author={Kisil, Vladimir~V.},
       title={Erlangen program at large--1: Geometry of invariants},
        date={2010},
     journal={SIGMA, Symmetry Integrability Geom. Methods Appl.},
      volume={6},
      number={076},
       pages={45},
        note={\arXiv{math.CV/0512416}. \MR{2011i:30044}. \Zbl{1218.30136}},
}

\bib{Kisil09d}{incollection}{
      author={Kisil, Vladimir~V.},
       title={Wavelets beyond admissibility},
        date={2010},
   booktitle={Progress in analysis and its applications},
      editor={Ruzhansky, M.},
      editor={Wirth, J.},
   publisher={World Sci. Publ., Hackensack, NJ},
       pages={219\ndash 225},
         url={http://dx.doi.org/10.1142/9789814313179_0029},
        note={\arXiv{0911.4701}. \Zbl{1269.30052}},
      review={\MR{2766965}},
}

\bib{Kisil10c}{article}{
      author={Kisil, Vladimir~V.},
       title={Covariant transform},
        date={2011},
     journal={Journal of Physics: Conference Series},
      volume={284},
      number={1},
       pages={012038},
         url={http://stacks.iop.org/1742-6596/284/i=1/a=012038},
        note={\arXiv{1011.3947}},
}

\bib{Kisil11c}{incollection}{
      author={Kisil, Vladimir~V.},
       title={{E}rlangen programme at large: an {O}verview},
        date={2012},
   booktitle={Advances in applied analysis},
      editor={Rogosin, S.V.},
      editor={Koroleva, A.A.},
   publisher={Birkh\"auser Verlag},
     address={Basel},
       pages={1\ndash 94},
        note={\arXiv{1106.1686}},
}

\bib{Kisil12a}{book}{
      author={Kisil, Vladimir~V.},
       title={Geometry of {M}\"obius transformations: {E}lliptic, parabolic and
  hyperbolic actions of {$\mathrm{SL}_2(\mathbf{R})$}},
   publisher={Imperial College Press},
     address={London},
        date={2012},
        note={\Zbl{1254.30001}},
}

\bib{Kisil12b}{article}{
      author={Kisil, Vladimir~V.},
       title={Operator covariant transform and local principle},
        date={2012},
     journal={J. Phys. A: Math. Theor.},
      volume={45},
       pages={244022},
        note={\arXiv{1201.1749}.
  \href{http://stacks.iop.org/1751-8121/45/244022}{On-line}},
}

\bib{Kisil09e}{article}{
      author={Kisil, Vladimir~V.},
       title={Induced representations and hypercomplex numbers},
        date={2013},
     journal={Adv. Appl. Clifford Algebras},
      volume={23},
      number={2},
       pages={417\ndash 440},
         url={http://dx.doi.org/10.1007/s00006-012-0373-1},
        note={\arXiv{0909.4464}. \Zbl{1269.30052}},
}

\bib{Kisil13a}{article}{
      author={Kisil, Vladimir~V.},
       title={Calculus of operators: {C}ovariant transform and relative
  convolutions},
        date={2014},
     journal={Banach J. Math. Anal.},
      volume={8},
      number={2},
  pages = 	 {156--184},
         url={\url{http://www.emis.de/journals/BJMA/tex_v8_n2_a15.pdf}},
        note={\arXiv{1304.2792},
  \href{http://www.emis.de/journals/BJMA/tex_v8_n2_a15.pdf}{on-line}},
}

\bib{KlaSkag85}{book}{
      editor={Klauder, John~R.},
      editor={Skagerstam, Bo-Sture},
       title={Coherent states},
    subtitle={Applications in physics and mathematical physics},
   publisher={World Scientific Publishing Co.},
     address={Singapore},
        date={1985},
        ISBN={9971-966-52-2; 9971-966-53-0},
      review={\MR{826247 (87a:81070)}},
}

\bib{Koosis98a}{book}{
      author={Koosis, Paul},
       title={Introduction to {$H\sb p$} spaces},
     edition={Second edition},
      series={Cambridge Tracts in Mathematics},
   publisher={Cambridge University Press},
     address={Cambridge},
        date={1998},
      volume={115},
        ISBN={0-521-45521-9},
        note={With two appendices by V. P. Havin [Viktor Petrovich Khavin]},
      review={\MR{1669574 (2000b:30052)}},
}

\bib{Krantz09a}{book}{
      author={Krantz, Steven~G.},
       title={Explorations in harmonic analysis. {W}ith applications to complex
  function theory and the {H}eisenberg group},
      series={Applied and Numerical Harmonic Analysis},
   publisher={Birkh\"auser Boston Inc.},
     address={Boston, MA},
        date={2009},
        ISBN={978-0-8176-4668-4},
         url={http://dx.doi.org/10.1007/978-0-8176-4669-1},
        note={With the assistance of Lina Lee},
      review={\MR{2508404 (2011a:43017)}},
}

\bib{McIntosh95a}{incollection}{
      author={McIntosh, Alan},
       title={{Clifford} algebras, {Fourier} theory, singular integral
  operators, and partial differential equations on {Lipschitz} domains},
        date={{\noopsort{}}1995},
   booktitle={{Clifford} algebras in analysis and related topics},
      editor={Ryan, John},
   publisher={CRC Press},
     address={Boca Raton},
       pages={33\ndash 88},
}

\bib{MityushevRogosin00a}{book}{
      author={Mityushev, Vladimir~V.},
      author={Rogosin, Sergei~V.},
       title={Constructive methods for linear and nonlinear boundary value
  problems for analytic functions. {Theory} and applications},
      series={Chapman \& Hall/CRC Monographs and Surveys in Pure and Applied
  Mathematics},
   publisher={Chapman \& Hall/CRC, Boca Raton, FL},
        date={2000},
      volume={108},
        ISBN={1-58488-057-0},
      review={\MR{1739063 (2001d:30075)}},
}

\bib{Nikolski02a}{book}{
      author={Nikolski, Nikolai~K.},
       title={Operators, functions, and systems: an easy reading. {V}ol. 1:
  {Hardy}, {Hankel}, and {Toeplitz}},
      series={Mathematical Surveys and Monographs},
   publisher={American Mathematical Society},
     address={Providence, RI},
        date={2002},
      volume={92},
        ISBN={0-8218-1083-9},
      review={\MR{1 864 396}},
}

\bib{Nikolski02b}{book}{
      author={Nikolski, Nikolai~K.},
       title={Operators, functions, and systems: an easy reading. {V}ol. 2:
  Model operators and systems},
      series={Mathematical Surveys and Monographs},
   publisher={American Mathematical Society},
     address={Providence, RI},
        date={2002},
      volume={93},
        ISBN={0-8218-2876-2},
      review={\MR{1 892 647}},
}

\bib{Perelomov86}{book}{
      author={Perelomov, A.},
       title={Generalized coherent states and their applications},
      series={Texts and Monographs in Physics},
   publisher={Springer-Verlag},
     address={Berlin},
        date={1986},
        ISBN={3-540-15912-6},
      review={\MR{87m:22035}},
}

\bib{Stein70a}{book}{
      author={Stein, Elias~M.},
       title={Singular integrals and differentiability properties of
  functions},
      series={Princeton Mathematical Series, No. 30},
   publisher={Princeton University Press},
     address={Princeton, N.J.},
        date={1970},
      review={\MR{0290095 (44 \#7280)}},
}

\bib{Stein93}{book}{
      author={Stein, Elias~M.},
       title={Harmonic analysis: Real-variable methods, orthogonality, and
  oscillatory integrals},
      series={Princeton Mathematical Series},
   publisher={Princeton University Press},
     address={Princeton, NJ},
        date={1993},
      volume={43},
        ISBN={0-691-03216-5},
        note={With the assistance of Timothy S. Murphy, Monographs in Harmonic
  Analysis, III},
      review={\MR{1232192 (95c:42002)}},
}

\bib{Zygmund02}{book}{
      author={Zygmund, A.},
       title={Trigonometric series. {V}ol. {I}, {II}},
     edition={Third},
      series={Cambridge Mathematical Library},
   publisher={Cambridge University Press},
     address={Cambridge},
        date={2002},
        ISBN={0-521-89053-5},
        note={With a foreword by Robert A. Fefferman},
      review={\MR{1963498 (2004h:01041)}},
}

\end{biblist}
\end{bibdiv}
}

\vskip 1 cm \footnotesize
\begin{flushleft}
Vladimir V. Kisil\\
School of Mathematics\\
University of Leeds\\
Leeds LS2\,9JT\\
UK\\
E-mail:
\href{mailto:kisilv@maths.leeds.ac.uk}{\texttt{kisilv@maths.leeds.ac.uk}}\\
Web: \href{http://www.maths.leeds.ac.uk/~kisilv/}%
{http://www.maths.leeds.ac.uk/\~{}kisilv/} 
\end{flushleft}

\vskip0.5cm
\begin{flushright}
Received: 12.09.2013
\end{flushright}

\printindex
\end{document}